\documentclass{article}
\usepackage[english]{babel}
\usepackage{amsmath, amsxtra, latexsym,amscd, amsthm, amssymb,indentfirst}
\usepackage[mathscr]{eucal}
\usepackage{enumerate}
\usepackage{textcomp}
\usepackage{array, tabularx, longtable}%
\usepackage{multicol}
\usepackage{color}
\usepackage{algorithm}%
\usepackage{algorithmicx}%
\usepackage{algpseudocode}%
\usepackage{listings}%
\usepackage{enumitem}
\usepackage{graphicx}
\usepackage{subcaption}
\usepackage{amsthm}

\theoremstyle{plain}%
\newtheorem{theorem}{Theorem}[section]
\newtheorem{corollary}{Corollary}[section]%  meant for continuous numbers
\newtheorem{lemma}{Lemma}[section]% meant for sectionwise numbers
\newtheorem{remark}{Remark}%
\theoremstyle{thmstyletwo}%
\newtheorem{example}{Example}%

\theoremstyle{thmstylethree}%
\newtheorem{definition}{Definition}%

\title{Fixed-Time Convergence of Time-Varying
	Neurodynamic Systems for Mixed Variational
	Inequalities}

\author{Vajahat Karim Khan\thanks{Department of Mathematics, Aligarh Muslim University, Aligarh, U.P, India; email: gh7904@myamu.ac.in, gi9766@myamu.ac.in, ahmad\_kalimuddin@yahoo.co.in}, Vijendra Kumar Varshney and Md. Kalimuddin Ahmad
}

\begin{document}
	\maketitle
	%======%%
%% Sample for unstructured abstract %%
%%==================================%%

\abstract{This paper proposes novel fixed-time (FXT) convergent neurodynamic approaches for solving mixed variational inequality problems (MVIs). A class of first-order proximal neurodynamic models (PNMs), including time-varying proximal neurodynamic models (TVPNMs), is developed to guarantee FXT convergence to the solution of MVIs from arbitrary initial conditions. Rigorous convergence and stability analyses are established under the assumptions of strong pseudomonotonicity and Lipschitz continuity, using Lyapunov stability theory. The proposed methods exhibit FXT convergence from any initial point, with convergence speed significantly enhanced through the strategic design of time-varying coefficients. Explicit upper bounds on the settling time are derived for the time-varying neurodynamic models. In addition, the robustness of the proposed approaches against bounded noise disturbances is analyzed. The applicability of the proposed framework is further demonstrated for composite optimization problems and minimax optimization problems. Also, numerical examples are presented to demonstrate the effectiveness and convergence behavior of the proposed methods.}

{\bf keywords} {Mixed Variational Inequalities, Proximal Dynamic Methods,  Strong pseudo-monotonicity, Convergence, Global Stability Analysis.}

%%\pacs[JEL Classification]{D8, H51}

%\pacs[MSC Classification] {65L10, 26D10, 90C31, 49J40, 49M30..}

\section{Introduction}\label{p3sec1}
Let $\mathbb{R}^l$ denote the real $l$-dimensional Euclidean space equipped with the inner product 
$\langle \cdot , \cdot \rangle$ and the induced norm $\|\cdot\|$. Variational inequality problems (VIPs) provide a powerful and unified mathematical framework for modeling a wide range of problems arising in optimization, equilibrium theory, signal processing, and control systems \cite{Facchinei03,Kinderlehrer00, Scutari10,Konnov12, Neittaanmaki88}. Due to their generality, VIPs can characterize constrained optimization problems, saddle-point formulations, nonlinear complementarity problems, and Nash equilibrium seeking problems \cite{Scutari10, JuX25, Korpelevich, Cavazzuti02}. A significant extension of VIPs is the class of mixed variational inequality problems (MVIPs), which incorporate an additional nonsmooth convex function and therefore have extensive applications in game theory, image processing, and engineering systems \cite{Zheng24,JuX21Neuro}.

Over the past decades, numerous numerical algorithms have been developed to solve VIPs and MVIPs, including projection methods, extragradient algorithms, forward--backward splitting schemes, and proximal-based methods \cite{Parikh14,Censor1,Tseng, Thong18, JuX21exp, Pham17, Malitsky20, Vuong20}. These techniques are typically implemented in discrete time and are well-suited for digital computation. However, their sequential nature and iterative structure often limit their applicability in real-time and large-scale settings, where fast convergence and parallel processing capabilities are crucial.

 To overcome these limitations, neurodynamic optimization approaches based on continuous-time dynamical systems have been widely investigated. Such approaches leverage the intrinsic parallelism of neural dynamics and can be efficiently realized via analog circuits or hardware platforms. Early neurodynamic models for variational inequalities focused mainly on asymptotic or exponential convergence, which implies that the equilibrium solution is reached only as time approaches infinity. Although sufficient in theory, infinite-time convergence is undesirable in practical applications that require rapid, predictable convergence behavior.

Motivated by these concerns, finite-time convergence has been introduced into neurodynamic optimization frameworks, enabling solutions to be reached within a finite settling time  \cite{JuX21Novel, JuX21Neuro, JuX25}. Nevertheless, a major drawback of finite-time stable systems is that the convergence time depends explicitly on the initial conditions, making it difficult to estimate or guarantee performance in advance. Fixed-time stability theory was subsequently proposed to address this issue by ensuring convergence within a uniform upper bound that is independent of initial states \cite{Garg23, JuX23, JuX25,Zheng22, Liao23, Nguyen23}. In recent years, fixed-time neurodynamic approaches have been successfully applied to variational inequality problems, Nash equilibrium seeking, and signal recovery tasks \cite{Zheng22, Polyakov12, JuX21Novel}

Despite these advances, most existing fixed-time neurodynamic models rely on constant gain parameters, which may result in conservative settling-time estimates and limited flexibility in tuning the convergence behaviour. To further improve convergence performance, time-varying neurodynamic approaches have been proposed, offering additional degrees of freedom for system design and allowing faster convergence with less conservative time bounds \cite{JuX25, Zheng24}. Moreover, robustness against bounded disturbances and noise is a critical requirement for practical implementations, particularly in hardware-based realizations.

Recently, time-varying neurodynamic approaches have emerged as an effective means of accelerating convergence and reducing conservatism in fixed-time stability analysis. By allowing system coefficients to vary with time, these approaches introduce additional degrees of freedom for shaping system trajectories and enhancing transient performance. Meanwhile, robustness against bounded disturbances and external noise has become an essential requirement, particularly for real-world implementations subject to hardware limitations and environmental perturbations.

Inspired by these observations, this paper proposes novel fixed-time (FXT) neurodynamic optimization approaches with time-varying coefficients for solving MVIPs. The proposed methods guarantee fixed-time convergence to the MVIPs solution from arbitrary initial conditions, while significantly improving the convergence speed through a suitable design of time-varying parameters. By employing Lyapunov stability theory, explicit upper bounds on the settling time are derived, demonstrating that the convergence time is independent of both the initial state and system parameters. Furthermore, the robustness of the proposed neurodynamic system against bounded disturbances and noise is rigorously analyzed.\\
\noindent
The main contributions of this paper are summarized as follows:\\
\noindent
\begin{enumerate}
\item A novel first-order proximal neurodynamic model (PNM) is proposed for solving MVIPs. Fixed-time convergence is rigorously established under the assumptions of strong pseudomonotonicity and Lipschitz continuity.
\item A class of fixed-time convergent time-varying proximal neurodynamic models (TVPNMs) is further developed. Sufficient conditions ensuring fixed-time stability are derived, and detailed convergence analyses are provided for each model.
\item The proposed neurodynamic framework is extended to composite optimization problems (COPs) and minimax optimization problems (MOPs). Numerical simulations are presented to validate the effectiveness
of the proposed methods.
\end{enumerate}
\section{Preliminary Results with Notations}\label{p3sec2}
 Let $\Omega$ be a closed, convex subset of $\mathbb{R}^l$, which is nonempty. Suppose that $\Psi : \mathbb{R}^l \rightarrow \mathbb{R} \cup \{+\infty\}$ is a proper, l.s.c and convex real-valued function, and $\Upsilon : \mathbb{R}^l \rightarrow \mathbb{R}^l$ is a vector-valued function. Then, mixed variational inequalities (MVI) $(\Upsilon, \Omega)$ problem seeks a point $w^* \in \Omega$ that satisfies the condition 
\begin{equation}\label{MV}
    \langle \Upsilon(w^*), y - w^* \rangle + \Psi(y) - \Psi(w^*) \geq 0, \quad \forall y \in \mathbb{R}^l.
\end{equation}
Therefore, the solution set of \(\operatorname{MVIP}(\Upsilon, \Omega )\) is denoted by \(\operatorname{Sol}(\Upsilon, \Omega)\). For every $\mu$, we defined the proximal operator $\operatorname{prox}_{\mu \Psi}(v)$ as:
\begin{equation}\label{eqproximal}
    \operatorname{prox}_{\mu \Psi}(v) = \arg\min_{z \in \Omega} \left\{ \Psi(z) + \frac{1}{2\mu} \|v - z\|^2 \right\},\quad\forall v\in\Omega.
\end{equation}
The operator $\operatorname{prox}_{\mu \Psi}$ plays a crucial role in the dynamic system.\\

\noindent
In this section, we introduce the definitions, assumptions, and lemmas required throughout the paper.
A function $\Upsilon : \Omega\subseteq
\mathbb{R}^l \to \mathbb{R}$ is said to be:
\begin{enumerate}
    \item \textbf{strongly monotone} with modulus \(\theta > 0\) on $\Omega$, if
    \[
    \langle \Upsilon(w) - \Upsilon(y), w - y \rangle
\ge\theta \|w - y\|^2,
\quad \forall w, y \in \Omega.
    \]
\item \textbf{monotone} on $\Omega$, if
    \[
        \langle \Upsilon(w)-\Upsilon(y), w - y \rangle
\ge 0,
        \quad \forall w, y \in \Omega.
    \]
\end{enumerate}  
   \textbf{Assumption (A1).} $\Upsilon$ is \textbf{strongly pseudo-monotone} with modulus \(\zeta > 0\) on $\Omega$, if  
    \[ \langle \Upsilon(w), y - w \rangle+ \Psi(y) - \Psi(w) \ge 0,
        \quad \forall w, y \in \Omega,
    \]
        implies
        \[
\langle \Upsilon(y), y - w \rangle+ \Psi(y) - \Psi(w)
\ge \zeta \|w - y\|^2,
\quad \forall w, y \in \Omega .
    \]
    \textbf{Assumption (A$1^*$).} $\Upsilon$ is \textbf{pseudo-monotone} on $\Omega$, if
     \[ \langle \Upsilon(w), y - w \rangle+ \Psi(y) - \Psi(w) \ge 0,
        \quad \forall w, y \in \Omega,
    \]
        implies
        \[
\langle \Upsilon(y), y - w \rangle+ \Psi(y) - \Psi(w)
\ge 0,
\quad \forall w, y \in \Omega .
    \]
    \textbf{Assumption (A2).} $\Upsilon$ is a \textbf{Lipschitz continuous} on $\Omega$, if there exists a constant \(L > 0\) such that  
\[
\|\Upsilon(w) - \Upsilon(y)\|
\le L \|w - y\|, \qquad \forall w,y \in \Omega.
\]
\textbf{Assumption (A3).} \(\mu L^{2} < 2\zeta\).\\
\begin{remark} The implications \((1) \Rightarrow (2)\), \((1) \Rightarrow (\textbf{A} 1)\), \((\textbf{A}  1) \Rightarrow (\textbf{A}  1^*)\), and 
\((2) \Rightarrow (\textbf{A} 1^*)\) are evident. Assumption \textbf{A2}, guarantees the uniqueness of a solution to $\operatorname{MVIP}(\Upsilon,\Omega)$.\\
\end{remark}
\begin{lemma}\label{lemFXtime}\cite{TVl1}
Consider the following time-varying system:
\begin{equation}\label{ODM}
\dot{w}(t) = \mathcal{G}(t,w(t)), \qquad w(t_0)=w_0,
\end{equation}
where $w:[t_0,+\infty)\to\mathbb{R}^l$, and
$\mathcal{G}:[t_0,+\infty)\times\mathbb{R}^l\to\mathbb{R}^l$ is a continuous function.
Suppose there exists a continuously differentiable positive definite
function $\mathfrak{V}:[t_0,+\infty)\times\mathbb{R}^l\to\mathbb{R}_+$ such that
\begin{equation}
\dot {\mathfrak{V}}(t,w(t))
\le
- \Theta_1 g_1(t) \mathfrak{V}^{p_1}(t,w(t))
- \Theta_2 g_2(t) \mathfrak{V}^{p_2}(t,w(t)),
\end{equation}
where $\Theta_1>0$, $\Theta_2>0$, $0<p_1<1$, and $p_2>1$ are constants,
\[
\dot {\mathfrak{V}}(t,w(t))
:=
\frac{\partial \mathfrak{V}}{\partial t}(t,w)
+
\frac{\partial \mathfrak{V}}{\partial w
}(t,w)\,\dot w(t),
\]
and $g_1,g_2:[t_0,+\infty)\to\mathbb{R}_{++}$ are continuous functions.
Furthermore, assume that
\begin{equation}
\frac{1}{\Theta_2(p_2-1)}
<
\int_{t_0}^{+\infty} g_2(s)\,ds,
\end{equation}
and
\begin{equation}
N_1\circ N_2^{-1}
\!\left(\frac{1}{\Theta_2(p_2-1)}\right)
+
\frac{1}{\Theta_1(1-p_1)}
<
\int_{t_0}^{+\infty} g_1(s)\,ds,
\end{equation}
where
\(N_1(t)=\int_{t_0}^{t} g_1(s)\,ds, \ \text{and} \ 
N_2(t)=\int_{t_0}^{t} g_2(s)\,ds 
\) are the functions respectively. Then, there exists a time $\mathcal{T}>t_0$ such that for all $t\ge\mathcal{T}$ and any solution
$w(t)$ of system \eqref{ODM}, the function $\mathfrak{V}(t,w(t))$ satisfies
$
\mathfrak{V}(t,w(t))=0$.
Moreover, the settling time $T$ satisfies
\begin{equation}
\mathcal{T}
\le
N_1^{-1}
\!\left(
N_1\circ N_2^{-1}
\!\left(\frac{1}{\Theta_2(p_2-1)}\right)
+
\frac{1}{\Theta_1(1-p_1)}
\right).
\end{equation}\\
\end{lemma}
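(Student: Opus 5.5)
The plan is a two-phase comparison argument in which time is reparametrized separately by $N_2$ and by $N_1$. Write $v(t)=\mathfrak{V}(t,w(t))\ge 0$ along an arbitrary solution. Both terms on the right-hand side of the differential inequality are nonpositive, so $v$ is nonincreasing. Moreover, once $v$ vanishes it stays zero, because $v\ge 0$ and $\dot v\le 0$. We may therefore work on intervals where $v>0$, and there we drop one of the two terms in each phase.

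\emph{Phase 1 (large $v$, driven by the $p_2$ term).} On the set where $v>0$ we have $\dot v\le -\Theta_2 g_2(t)v^{p_2}$. The key computation is
\[
\frac{d}{dt}v^{1-p_2}=(1-p_2)v^{-p_2}\dot v\ge \Theta_2(p_2-1)g_2(t).
\]
Integrating from $t_0$ gives $v^{1-p_2}(t)\ge v^{1-p_2}(t_0)+\Theta_2(p_2-1)N_2(t)\ge \Theta_2(p_2-1)N_2(t)$. The lower bound does not depend on $v(t_0)$, which is exactly where the initial-condition independence comes from. By the first integral hypothesis, and because $N_2$ is continuous and strictly increasing ($g_2>0$), the time $t_1:=N_2^{-1}\bigl(1/(\Theta_2(p_2-1))\bigr)$ is well defined. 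At that time $v^{1-p_2}(t_1)\ge 1$, so $v(t_1)\le 1$ (or $v$ has already reached zero, in which case we are done).

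\emph{Phase 2 (small $v$, driven by the $p_1$ term).} For $t\ge t_1$ with $v>0$, use $\dot v\le -\Theta_1 g_1(t)v^{p_1}$. Then
\[
\frac{d}{dt}v^{1-p_1}\le -\Theta_1(1-p_1)g_1(t).
\]
Integrating over $[t_1,t]$ gives
\[
v^{1-p_1}(t)\le v^{1-p_1}(t_1)-\Theta_1(1-p_1)\bigl(N_1(t)-N_1(t_1)\bigr)\le 1-\Theta_1(1-p_1)\bigl(N_1(t)-N_1(t_1)\bigr).
\]
The right-hand side reaches zero when $N_1(t)=N_1(t_1)+1/(\Theta_1(1-p_1))$. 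The second integral hypothesis ensures that this value lies in the range of $N_1$, which is strictly increasing, so
\[
\mathcal{T}:=N_1^{-1}\Bigl(N_1\circ N_2^{-1}\bigl(\tfrac{1}{\Theta_2(p_2-1)}\bigr)+\tfrac{1}{\Theta_1(1-p_1)}\Bigr)
\]
is finite and $v$ must vanish no later than $\mathcal{T}$. Combining this with monotonicity gives $v\equiv 0$ on $[\mathcal{T},\infty)$ together with the stated bound on the settling time.

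\textbf{Main obstacle.} The delicate part is handling the powers $v^{1-p}$ where $v$ might touch zero: $v^{1-p_2}$ blows up there, and $v^{1-p_1}$ is not differentiable at zero. I would deal with this by working on the maximal interval on which $v>0$ and using the absorbing-zero property noted at the start. The same care is needed to justify the chain rule along $t\mapsto (t,w(t))$, which holds because $\mathfrak{V}$ is $C^1$ and $w$ is $C^1$ (as $\mathcal{G}$ is continuous). Existence of solutions on $[t_0,\infty)$ is implicitly assumed, and finite-time blow-up is excluded anyway since $v$ is nonincreasing and $\mathfrak{V}$ is positive definite.
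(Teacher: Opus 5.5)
Your proof is correct. The paper gives no proof of this lemma; it is simply imported from \cite{TVl1}, so there is no in-paper argument to compare against.

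Your two-phase comparison is the standard proof, and it is exactly what the form of the settling-time bound encodes. In the first phase, the $\mathfrak{V}^{p_2}$ term forces $v\le 1$ by the time $t_1=N_2^{-1}\bigl(1/(\Theta_2(p_2-1))\bigr)$, which does not depend on the initial condition. In the second phase, the $\mathfrak{V}^{p_1}$ term drives $v$ from at most $1$ to $0$ within an additional $N_1$-increment of $1/(\Theta_1(1-p_1))$. Your handling of the zero set is sound: you work on the maximal interval where $v>0$ and use that $v\ge 0$ with $\dot v\le 0$ makes zero absorbing. The strict inequalities in both integral hypotheses are used exactly where needed, namely to ensure $t_1$ and $\mathcal{T}$ lie in the ranges of $N_2$ and $N_1$.

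One small correction to your closing remark. Positive definiteness of $\mathfrak{V}$ plus monotonicity of $v$ does not by itself rule out finite-time escape of $w$; that would need $\mathfrak{V}$ to be radially unbounded, uniformly in $t$. This is moot, however, because the lemma already takes solutions to be defined on $[t_0,+\infty)$, and your argument only ever uses $v\ge 0$.
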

\begin{definition}\cite{5,6}\label{EPdef1}
From the system \eqref{ODM}, we obtained
\begin{itemize}
    \item[(a)] 
    A point \(w^* \in\mathbb{R}^l\) is an \emph{equilibrium point} of \eqref{ODM} if \(\mathcal{G}(w^*) = 0.\)
    \item[(b)] 
    An equilibrium point \(w^*\) of \eqref{ODM} is said to be Lyapunov stability if, 
    for any \(\varepsilon > 0\), there exists \(\delta > 0\) such that 
    \[\|w_0-w^*\|<\delta\implies\forall t\ge t_0,\ \|w(t)-w^*\|<\varepsilon .\]
    \item [(c)] An equilibrium point \(w^*\) of \eqref{ODM} is said to be FT convergence if  there exists a FT, $\mathcal{T}:\mathbb{R}^l\setminus\{0\}\to (0,\infty)\quad (\text{i.e.}\ \mathcal{T} < \infty)$ such that for all $w(0) \in N \setminus \{0\}$,
    \[ \lim_{t \to \mathcal{T}} w(t)=0 \quad \text{and} \quad \mathcal{T} = \mathcal{T}(w(0)) < \infty.
    \]
\end{itemize}
\end{definition}

\begin{remark}
A solution \(w(t)\) of system \eqref{ODM} exists on an interval containing \(t_{0}=0\) if \(\mathcal{G}\) is continuous on its domain. Additionally, the uniqueness of the solution to \eqref{ODM} follows from the local Lipschitz continuity of \(\mathcal{G}\). Any equilibrium point of \eqref{ODM} which is not at the origin can be translated to the origin by a suitable change of variables (see \cite{7}). In particular, let \(\bar{w}\) be a nonzero equilibrium point of \eqref{ODM} and define \(u:= w-\bar{w}\). Then, in terms of the new variable \(u\),
\[
\dot{u}=\dot{w}=\mathcal{J}, \qquad u(0)=u_{0}\in \mathbb{R}^{l},
\]
the origin \(u=0\) becomes an equilibrium point, where \(\mathcal{J}:=\mathcal{G}(u+\bar{w})\). Hence, Definition~\eqref{EPdef1} applies whether the system $\dot{w}=\mathcal{G}(w)$ has its equilibrium point at the origin or not.

\end{remark}

\begin{definition}\cite{18}
The equilibrium point of \eqref{ODM} is said to be FXT stable, if it is FT stable and there exists an upper bound $\mathcal{T}_{\max} < \infty$ such that
\[
    \sup_{w_0 \in \mathbb{R}^l} \mathcal{T}(w_0) \leq \mathcal{T}_{\max}.
\]
\end{definition}
\begin{lemma}[Pythagoras identity]
For any $a, b, c \in \mathbb{R}^l$, it holds that $2\langle a - b, c - b \rangle = \|a - b\|^2 + \|c - b\|^2 - \|a - c\|^2$.\\
\end{lemma}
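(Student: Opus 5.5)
The identity follows by expanding both sides in terms of inner products in $\mathbb{R}^l$, using only bilinearity and symmetry of $\langle\cdot,\cdot\rangle$ and the relation $\|x\|^2=\langle x,x\rangle$.

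Set $u:=a-b$ and $v:=c-b$. Then $a-c=(a-b)-(c-b)=u-v$, so the claim reads
\[
2\langle u,v\rangle=\|u\|^2+\|v\|^2-\|u-v\|^2 .
\]
Expand the last term on the right:
\[
\|u-v\|^2=\langle u-v,u-v\rangle=\|u\|^2-2\langle u,v\rangle+\|v\|^2 .
\]
Symmetry of the inner product is what merges the two cross terms $\langle u,v\rangle$ and $\langle v,u\rangle$ into $2\langle u,v\rangle$.

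Substituting this expansion, the right-hand side becomes $\|u\|^2+\|v\|^2-\|u\|^2+2\langle u,v\rangle-\|v\|^2=2\langle u,v\rangle$, which is the left-hand side. Rewriting $u,v$ back in terms of $a,b,c$ gives the stated identity.

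No step is a genuine obstacle. The only point needing care is the sign bookkeeping in the expansion of $\|u-v\|^2$, namely that the cross term enters with a minus sign, so that subtracting it produces $+2\langle u,v\rangle$.
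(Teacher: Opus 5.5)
Your proof is correct and complete: substituting $u=a-b$, $v=c-b$ and expanding $\|u-v\|^2$ is exactly the standard verification. The paper states this identity without proof as a standard fact, so your argument supplies the routine check it leaves implicit.
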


\begin{lemma}\cite{10}\label{EPvklem6}
Let $\mu > 0$, $\Psi : \mathbb{R}^l \rightarrow (-\infty, +\infty]$ be a l.s.c. function, which is proper and convex. We denote by $\operatorname{prox}_{\mu \Psi}$ the proximal operator defined in \eqref{eqproximal}.
 Then the following assertions are true:
\begin{enumerate}
    \item For any $w\in \mathbb{R}^l$, we have
    \[
    \nu = \text{prox}_{\mu \Upsilon}(w) \Leftrightarrow \langle \nu -w, a - \nu \rangle \geq \mu \Psi(\nu) - \mu \Psi(a), \quad \forall a \in \mathbb{R}^l.
    \]
    \item $\|\text{prox}_{\mu \Upsilon}(w) - \text{prox}_{\mu \Upsilon}(y)\| \leq \|w - y\|$, $\forall w, y \in \mathbb{R}^l$.\\
\end{enumerate}
\end{lemma}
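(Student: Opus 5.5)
The plan has two parts, one for each assertion, and both rest on the optimality condition of the strongly convex problem in \eqref{eqproximal}. (In the statement $\operatorname{prox}_{\mu\Upsilon}$ is evidently meant to be $\operatorname{prox}_{\mu\Psi}$; we argue for the proximal operator of $\Psi$ over $\mathbb{R}^l$.) Fix $w$ and set
\[
\phi(z)=\Psi(z)+\tfrac{1}{2\mu}\|w-z\|^2 .
\]
Since $\Psi$ is proper, l.s.c. and convex, $\phi$ is proper, l.s.c. and $\tfrac1\mu$-strongly convex. Hence $\phi$ is coercive and has a unique minimizer $\nu$, so $\operatorname{prox}_{\mu\Psi}$ is well defined.

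\emph{Part 1, forward direction.} Let $\nu$ be the minimizer and take any $a\in\mathbb{R}^l$. For $\lambda\in(0,1)$ put $z_\lambda=\nu+\lambda(a-\nu)$. Minimality gives $\phi(\nu)\le\phi(z_\lambda)$, and convexity of $\Psi$ gives $\Psi(z_\lambda)\le(1-\lambda)\Psi(\nu)+\lambda\Psi(a)$. Expanding the quadratic,
\[
\|w-z_\lambda\|^2=\|w-\nu\|^2-2\lambda\langle w-\nu,a-\nu\rangle+\lambda^2\|a-\nu\|^2 .
\]
Substituting these into $\phi(\nu)\le\phi(z_\lambda)$, dividing by $\lambda$ and letting $\lambda\downarrow0$ yields
\[
0\le \Psi(a)-\Psi(\nu)-\tfrac1\mu\langle w-\nu,a-\nu\rangle .
\]
Multiplying by $\mu$ gives $\langle \nu-w,a-\nu\rangle\ge\mu\Psi(\nu)-\mu\Psi(a)$. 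If $\Psi(a)=+\infty$ the inequality is trivial.

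\emph{Part 1, converse.} Suppose $\nu$ satisfies the inequality for all $a$. By the Pythagoras identity,
\[
\|w-a\|^2=\|w-\nu\|^2+\|\nu-a\|^2+2\langle \nu-w,a-\nu\rangle\ge\|w-\nu\|^2+2\mu(\Psi(\nu)-\Psi(a)).
\]
Rearranging gives $\phi(a)\ge\phi(\nu)+\tfrac1{2\mu}\|a-\nu\|^2\ge\phi(\nu)$, so $\nu$ is the minimizer.

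\emph{Part 2.} Let $\nu_1=\operatorname{prox}_{\mu\Psi}(w)$ and $\nu_2=\operatorname{prox}_{\mu\Psi}(y)$. Apply Part 1 for $\nu_1$ with $a=\nu_2$, and for $\nu_2$ with $a=\nu_1$. Adding the two inequalities cancels the $\Psi$ terms, which are finite because both points lie in the domain of $\Psi$. The result is
\[
\langle(\nu_1-\nu_2)-(w-y),\nu_2-\nu_1\rangle\ge0,
\qquad\text{i.e.}\qquad
\|\nu_1-\nu_2\|^2\le\langle w-y,\nu_1-\nu_2\rangle .
\]
Cauchy--Schwarz then gives firm nonexpansiveness, and hence $\|\nu_1-\nu_2\|\le\|w-y\|$. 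This is immediate when $\nu_1=\nu_2$, and otherwise follows by dividing by $\|\nu_1-\nu_2\|$.

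The only delicate step is the one-sided limit in the forward direction. There $\Psi$ may take the value $+\infty$, so the argument must restrict to $a\in\operatorname{dom}\Psi$. It must also use convexity rather than differentiability, since no derivative of $\Psi$ is available. Everything else is routine algebra.
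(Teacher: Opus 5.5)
Your proof is correct. The paper does not prove this lemma; it only quotes it from the literature, so there is no competing argument to compare against. Your three steps are the standard self-contained proof of the cited result:
\begin{enumerate}
\item first-order optimality along the segment $\nu+\lambda(a-\nu)$, using only convexity of $\Psi$;
\item the converse, by expanding $\|w-a\|^2$;
\item firm nonexpansiveness, by adding the two variational inequalities.
\end{enumerate}

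Your choice to minimize over all of $\mathbb{R}^l$ is actually necessary, not just convenient. Under the literal definition \eqref{eqproximal}, which minimizes over $\Omega$, assertion 1 with ``$\forall a\in\mathbb{R}^l$'' is false. For example, take $l=1$, $\Omega=[0,\infty)$, $\Psi(z)=z$, $\mu=1$ and $w=0$. Then $\nu=0$, and the inequality reads $0\ge -a$, which fails for $a<0$. The constrained version follows from your argument applied to $\Psi+\delta_\Omega$, which is proper whenever $\Omega\cap\operatorname{dom}\Psi\neq\emptyset$.

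One small slip in the converse direction: you drop $\|\nu-a\|^2$ from the displayed inequality but then claim $\phi(a)\ge\phi(\nu)+\frac{1}{2\mu}\|a-\nu\|^2$. That sharper bound needs the term kept. The bound you actually need, $\phi(a)\ge\phi(\nu)$, holds either way.

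It is also worth saying explicitly in that step that the hypothesis forces $\Psi(\nu)<\infty$. To see this, apply it to any $a\in\operatorname{dom}\Psi$. This guarantees the rearrangement involves only finite quantities.
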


\begin{lemma}\label{lemE2.5}\cite{21,22}
Assume that the conditions \textbf{A1}, \textbf{A2}, and \textbf{A3} hold, and $w^*$ is a solution to MVIP  \eqref{MV}. Let $\mathcal{H}(w) := \text{prox}_{\mu\Psi}(w -\mu\Upsilon(w))$ and $\Xi := \frac{1}{\sqrt{1+2\mu \zeta - \mu^2 L^2}}$. Then for every $w \in\mathbb{R}^l$ the following statements hold:
\begin{enumerate}[label=\textnormal{(\roman*)}]
    \item $\|\mathcal{H}(w) - w^* \| \leq \Xi \|w - w^* \|$.
    \item $\langle w -\mathcal{H}(w), w^* -\mathcal{H}(w) \rangle \leq \|w -\mathcal{H}(w) \|^2$.
    \item $\langle w - w^*, w -\mathcal{H}(w) \rangle \geq (1- \Xi) \|w - w^* \|^2$.
    \item $\|w -\mathcal{H}(w) \| \geq (1- \Xi) \|w - w^* \|$.
\end{enumerate}
\end{lemma}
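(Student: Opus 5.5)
The plan is to prove (i) first, using the characterization of the proximal operator in Lemma~\ref{EPvklem6}(1), the fixed-point property of $w^*$, and Assumption \textbf{A1}. Items (ii)--(iv) will then follow from (i) by elementary inner-product manipulations.

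\textbf{Step 1: the fixed-point identity.} From the MVIP \eqref{MV} together with Lemma~\ref{EPvklem6}(1), $w^*$ is a fixed point of $\mathcal{H}$, i.e.\ $w^* = \operatorname{prox}_{\mu\Psi}(w^*-\mu\Upsilon(w^*))$. I will also use that \eqref{MV} says exactly that the hypothesis of \textbf{A1} holds with $w=w^*$ and any $y$. Hence \textbf{A1} gives
\[
\langle \Upsilon(y), y-w^*\rangle + \Psi(y)-\Psi(w^*) \ge \zeta\|y-w^*\|^2 .
\]

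\textbf{Step 2: proof of (i).} Set $\nu=\mathcal{H}(w)$. Apply Lemma~\ref{EPvklem6}(1) with argument $w-\mu\Upsilon(w)$ and test point $a=w^*$:
\[
\langle \nu - w + \mu\Upsilon(w), w^*-\nu\rangle \ge \mu\Psi(\nu)-\mu\Psi(w^*).
\]
Next apply Step 1 with $y=\nu$, multiply by $\mu$, and add the two inequalities. The $\Psi$ terms cancel, leaving
\[
\langle \nu-w, w^*-\nu\rangle + \mu\langle \Upsilon(w)-\Upsilon(\nu), w^*-\nu\rangle \ge \mu\zeta\|\nu-w^*\|^2 .
\]
Expand the first term with the Pythagoras identity:
\[
2\langle \nu-w,w^*-\nu\rangle = \|w-w^*\|^2-\|\nu-w\|^2-\|\nu-w^*\|^2 .
\]
Bound the second term by Cauchy--Schwarz, \textbf{A2} and Young's inequality:
\[
2\mu\langle \Upsilon(w)-\Upsilon(\nu),w^*-\nu\rangle \le \|w-\nu\|^2 + \mu^2L^2\|\nu-w^*\|^2 .
\]
The $\|w-\nu\|^2$ terms cancel, and we obtain
\[
(1+2\mu\zeta-\mu^2L^2)\|\nu-w^*\|^2 \le \|w-w^*\|^2 .
\]
By \textbf{A3} the coefficient exceeds $1$, so $\Xi<1$ is well defined and (i) follows.

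This step is the main obstacle. The difficulty is choosing the Young weight so that the $\|w-\nu\|^2$ terms cancel exactly, and ensuring that \textbf{A1} is applied at the right pair of points. I would first try the weight $1$ as written above.

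\textbf{Step 3: (ii)--(iv).} For (iv), the triangle inequality and (i) give
\[
\|w-\mathcal{H}(w)\| \ge \|w-w^*\| - \|\mathcal{H}(w)-w^*\| \ge (1-\Xi)\|w-w^*\| .
\]
For (iii), write $w-\mathcal{H}(w) = (w-w^*) - (\mathcal{H}(w)-w^*)$. Then Cauchy--Schwarz and (i) give
\[
\langle w-w^*, w-\mathcal{H}(w)\rangle \ge \|w-w^*\|^2 - \Xi\|w-w^*\|^2 .
\]
For (ii), write $w^*-\mathcal{H}(w) = (w^*-w)+(w-\mathcal{H}(w))$. This gives
\[
\langle w-\mathcal{H}(w), w^*-\mathcal{H}(w)\rangle = \|w-\mathcal{H}(w)\|^2 - \langle w-w^*, w-\mathcal{H}(w)\rangle,
\]
and (iii) shows that the subtracted inner product is nonnegative since $\Xi\le 1$. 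Hence (ii) holds.
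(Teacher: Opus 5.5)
Your proof is correct and complete. The paper does not prove this lemma; it imports it from \cite{21,22}, so there is no in-paper argument to compare against. Your route is the standard one for such contraction estimates. You test the prox inequality for $\nu=\mathcal{H}(w)$ at $a=w^*$, feed the MVI at $w^*$ into \textbf{A1} at the pair $(w^*,\nu)$, and add so that the $\Psi$ terms cancel. You then close with the Pythagoras identity and Young's inequality, and (ii)--(iv) follow from (i) exactly as you wrote.

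Your hesitation about the Young weight is unnecessary. With $a=\|w-\nu\|$ and $b=\|\nu-w^*\|$, the bound $2\mu L\,ab\le a^2+\mu^2L^2b^2$ is just $(a-\mu L b)^2\ge 0$. The $\|w-\nu\|^2$ terms then cancel exactly as you computed. This weight is precisely the one that produces the constant $1+2\mu\zeta-\mu^2L^2$ in $\Xi$. A weight $\epsilon<1$ gives the worse constant $1+2\mu\zeta-\mu^2L^2/\epsilon$. A weight $\epsilon>1$ leaves an uncancelled $+(\epsilon-1)\|w-\nu\|^2$ on the wrong side.

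Three small remarks:
\begin{enumerate}
\item The fixed-point identity $w^*=\mathcal{H}(w^*)$ announced in Step 1 is never used. Only the MVI inequality, serving as the hypothesis of \textbf{A1}, enters.
\item The cancellation of the $\Psi$ terms is legitimate because both values are finite. $\Psi(\nu)<\infty$ since $\nu$ is a prox point of a proper function. $\Psi(w^*)<\infty$ since otherwise \eqref{MV} fails at any $y\in\operatorname{dom}\Psi$.
\item You invoke \textbf{A2} at the pair $(w,\mathcal{H}(w))$ with $w$ arbitrary in $\mathbb{R}^l$, and \textbf{A1} at $(w^*,\mathcal{H}(w))$. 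As worded, both assumptions hold only on $\Omega$. So the claim ``for every $w\in\mathbb{R}^l$'' really needs $\Upsilon$ to be Lipschitz on all of $\mathbb{R}^l$, and needs $\mathcal{H}(w)\in\Omega$. The latter holds for the prox in \eqref{eqproximal}, which minimizes over $\Omega$. This is a looseness in the statement, not in your argument.
\end{enumerate}
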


\section{Main Results of FXT Convergence}
We establish a clear connection between the solutions of  \eqref{MV} and the equilibrium points of our PNMs.
\begin{theorem}\label{ThE1}
For any \(\mu> 0\), a point \(w \in\operatorname{Sol}(\Upsilon, \Omega)\)
iff  
$w=\mathcal{H}(w)$
\end{theorem}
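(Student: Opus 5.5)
The plan is to reduce both directions to the variational characterization of the proximal operator in Lemma~\ref{EPvklem6}(1), applied at the point $v = w - \mu\Upsilon(w)$. By that lemma, $w = \mathcal{H}(w) = \operatorname{prox}_{\mu\Psi}(w-\mu\Upsilon(w))$ holds if and only if
\[
\langle w - (w - \mu\Upsilon(w)), a - w\rangle \ge \mu\Psi(w) - \mu\Psi(a), \quad \forall a.
\]
This simplifies to
\[
\mu\bigl(\langle \Upsilon(w), a - w\rangle + \Psi(a) - \Psi(w)\bigr) \ge 0 .
\]
Since $\mu>0$, we may divide by $\mu$, and the resulting inequality is exactly \eqref{MV} with $y=a$ and $w^*=w$. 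So the whole proof is the chain of equivalences
\[
w\in\operatorname{Sol}(\Upsilon,\Omega)\iff \langle \Upsilon(w), y-w\rangle+\Psi(y)-\Psi(w)\ge0\ \forall y \iff w=\mathcal{H}(w).
\]

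For the direction ($\Rightarrow$), take $w\in\operatorname{Sol}(\Upsilon,\Omega)$. Rewrite \eqref{MV} as $\langle w-(w-\mu\Upsilon(w)), y-w\rangle\ge \mu\Psi(w)-\mu\Psi(y)$. By the characterization this gives $w=\operatorname{prox}_{\mu\Psi}(w-\mu\Upsilon(w))$. For ($\Leftarrow$), assume $w=\mathcal{H}(w)$. Apply Lemma~\ref{EPvklem6}(1) with $\nu=w$, substitute $\nu - v=\mu\Upsilon(w)$, and divide by $\mu$ to recover \eqref{MV}.

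The only delicate point is bookkeeping about domains. The prox in \eqref{eqproximal} is defined with a minimization over $\Omega$, while \eqref{MV} quantifies over all $y\in\mathbb{R}^l$. I would handle this as follows. Membership $w\in\Omega$ is automatic in ($\Leftarrow$) because $\mathcal{H}(w)\in\Omega$. For the test points $a$, I would treat $\Psi$ as absorbing the constraint (i.e.\ $\Psi=+\infty$ off $\Omega$, or equivalently read both quantifiers over the same set). Under that convention the equivalence of Lemma~\ref{EPvklem6}(1) is used exactly as stated. I would also note that the typo $\operatorname{prox}_{\mu\Upsilon}$ in that lemma means $\operatorname{prox}_{\mu\Psi}$. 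Beyond this, no assumptions (A1)--(A3) are needed, and the result holds for every $\mu>0$.
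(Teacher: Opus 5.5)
Your proposal is correct and matches the paper's proof: both apply the variational characterization of the prox in Lemma~\ref{EPvklem6}(1) at $v = w-\mu\Upsilon(w)$, cancel to get $\mu\bigl(\langle \Upsilon(w), y-w\rangle + \Psi(y)-\Psi(w)\bigr)\ge 0$, and divide by $\mu>0$. Your remark on reconciling the $\Omega$-constrained prox with the $\mathbb{R}^l$-quantified MVI (letting $\Psi$ absorb the constraint) fixes an inconsistency the paper's proof does not address.
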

\begin{proof}
From Lemma~\ref{EPvklem6} (1), for all $w\in \mathbb{R}^l$, we have
\begin{equation*}
    \begin{split}
        w&= \mathcal{H}(w)\\
&\iff
\langle (w - \eta \Upsilon(w)) -w, y - w \rangle+ \eta \Psi(w) \le \eta \Psi(y),
\quad \forall\, y \in \mathbb{R}^l.
    \end{split}
\end{equation*}
This inequality is equivalent to
\[
\eta \langle \Upsilon(w), y - w \rangle+ \eta \Psi(y) - \eta \Psi(w) \ge 0,
\quad \forall\, y \in \mathbb{R}^l.
\]
Dividing both sides by $\eta > 0$, we obtain
\begin{equation*}
    \begin{split}
        \langle \Upsilon(w), y - w \rangle+ \Psi(y) - \Psi(w) &\ge 0,
\quad \forall w, y \in \mathbb{R}^l.
\end{split}
\end{equation*}
which completes the proof.
\end{proof}
\subsection{FXT Converging TVPNMs}
We propose the following FXT converging TVPNM for solving $\operatorname{MVIP}(\Upsilon,\Omega)$:
\begin{equation}\label{OD3}
\dot{w}(t) = - e(w(t))\,\phi(w(t)),
\end{equation}
where
\[
\phi(w) := w -\mathcal{H}(w), \qquad
\mathcal{H}(w) := \operatorname{prox}_{\mu \Psi}(w -\mu \Upsilon(w)),
\]
and
\[
e(w):=
\begin{cases}
\dfrac{\Upsilon_1(t)}{\|\phi(w)\|^{1-\rho_1}}
+\dfrac{\Upsilon_2(t)}{\|\phi(w)\|^{1-\rho_2}}
+\dfrac{\Upsilon_3(t)}{\|\phi(w)\|},
& w\notin \mathrm{Fix}(\mathcal{H}),\\[2mm]
0, & w \in \mathrm{Fix}(\mathcal{H}).
\end{cases}
\]
Here $\Upsilon_i(t):[t_0,+\infty)\to\mathbb{R}_{++}$ $(i=1,2,3)$ are continuous
functions, $0<\rho_1<1$ and $\rho_2>1$.
The TVPNM~\eqref{OD3} allows greater flexibility through its time-varying
coefficients. Obviously, $w^*$ is an equilibrium point of TVPNM~\eqref{OD3}
iff $w^*\in \mathrm{Sol}(\Upsilon, \Omega)$. 
We can easily verify that $w^*\in\text{Fix}(\mathcal{H}) = \operatorname{Sol}(\Upsilon, \Omega)= \{w^* \in \mathbb{R}^l : \phi(w^*) = 0\}$. Moreover, note that TVPNM \eqref{OD3} can be written in an equivalent form as follows:
\begin{equation}
\dot{w} = -\Upsilon_1(t) \frac{\kappa(w)}{\mathcal{T}_p} \left[ \|\phi(w)\|^{\rho_1} \left( 1 + \frac{\Upsilon_2(t)}{\Upsilon_1(t)} \|\phi(w)\|^{\rho_2 - \rho_1} \right) + \frac{\Upsilon_3(t)}{\Upsilon_1(t)} \right], \tag{13}
\end{equation}
where 
\[
\kappa(w) := \frac{\phi(w)}{\|\phi(w)\|} = \frac{w -\mathcal{H}(w)}{\|w -\mathcal{H}(w)\|} = \frac{w -\mathcal{H}(w)}{\|w -\mathcal{H}(w)\|}.
\]

\begin{remark}
\begin{enumerate}
    \item In the TVPNM~\eqref{OD3}, the term $-\kappa(w)$ can be interpreted as the search direction, while $\lambda_t := \Upsilon_1(t)\left[\|\phi(w)\|^{\rho_1}\left(1 + \frac{\Upsilon_2(t)}{\Upsilon_1(t)}\|\phi(w)\|^{\rho_2-\rho_1}\right) + \frac{\Upsilon_3(t)}{\Upsilon_1(t)}\right]$
represents the stepsize at the current iterate $w= w(t)$. Once the functions $\Upsilon_1(t),\Upsilon_2(t),\Upsilon_3(t)$ and $\rho_1,\rho_2$ are specified, the stepsize $\lambda_t$ is automatically updated with respect to $t$. Therefore, TVPNM~\eqref{OD3} may also be viewed as a TVPNM with a self-adaptive dynamical step size.\\
\item By taking  $\Upsilon_3(t) = 1$, TVPNM \eqref{OD3} is reduce to
\[
\dot{w} = -\Upsilon_1(t) \kappa(w) \left[ \|\phi(w)\|^{\rho_1} \left( 1 + \frac{\Upsilon_2(t)}{\Upsilon_1(t)} \|\phi(w)\|^{\rho_2 - \rho_1} \right) + \frac{1}{\Upsilon_1(t)} \right].
\]
\item By taking $\Upsilon_1(t) = \Upsilon_2(t) = 0$ in \eqref{OD3}, TVPNM \eqref{OD3} is reduce to the normalized TVPNM (NL-TVPNM):
\begin{equation}
\dot{w} = -\Upsilon_3(t)\kappa(w).
\end{equation}
\item By taking $\Upsilon_2(t) = \Upsilon_3(t) = 0$ in \eqref{OD3}, TVPNM \eqref{OD3} reduces to the fixed-time convergent TVPNM (FXT-TVPNM) given below:
\begin{equation}\label{ODE4}
\dot{w} = -\Upsilon_1(t) \kappa(w) \|w -\mathcal{H}(w)\|^{\rho_1}. 
\end{equation}
\item When $\Upsilon_3(t) = 0$, the TVPNM~\eqref{OD3} reduces to TVPNM in~\cite{20}. For $\Upsilon_3(t) > 0$, the stepsize $\lambda_t$ in TVPNM~\eqref{OD3} is larger than in the case $\Upsilon_3(t) = 0$.\\
\end{enumerate}
\end{remark}

\begin{lemma}\label{Cor5.1}
Let \(w^*\) be a solution of \(\operatorname{MVIP}(\Upsilon, \Omega )\) \eqref{MV} iff it is an equilibrium point of  TVPNM~\eqref{OD3}.
\end{lemma}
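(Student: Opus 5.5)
The plan is to reduce the statement to Theorem~\ref{ThE1} by showing that the equilibrium points of TVPNM~\eqref{OD3} are exactly the fixed points of $\mathcal{H}$. Throughout I read \emph{equilibrium point} in the sense of Definition~\ref{EPdef1}(a), adapted to the time-varying field. A point $w^*$ is an equilibrium if
\[
\mathcal{G}(t,w^*) := -e(w^*)\,\phi(w^*) = 0 \quad \text{for all } t \ge t_0.
\]

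For the direction ($\Rightarrow$), take $w^*\in\operatorname{Sol}(\Upsilon,\Omega)$. By Theorem~\ref{ThE1} we have $w^*=\mathcal{H}(w^*)$, so $w^*\in\mathrm{Fix}(\mathcal{H})$. By the definition of $e$, this gives $e(w^*)=0$. It also gives $\phi(w^*)=w^*-\mathcal{H}(w^*)=0$. Hence the right-hand side of \eqref{OD3} vanishes at $w^*$ for every $t$, and $w^*$ is an equilibrium.

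For the direction ($\Leftarrow$), suppose $w^*$ is an equilibrium and argue by contradiction, assuming $w^*\notin\mathrm{Fix}(\mathcal{H})$. Then $\phi(w^*)\neq 0$, so $\|\phi(w^*)\|>0$ and $e(w^*)$ is given by the first branch of its definition. Multiplying out, the vector field becomes
\[
-e(w^*)\phi(w^*) = -\Big(\Upsilon_1(t)\|\phi(w^*)\|^{\rho_1}+\Upsilon_2(t)\|\phi(w^*)\|^{\rho_2}+\Upsilon_3(t)\Big)\kappa(w^*).
\]
Here $\kappa(w^*)$ is a unit vector, and the scalar factor is strictly positive because each $\Upsilon_i(t)>0$. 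Its norm is therefore strictly positive, which contradicts $\mathcal{G}(t,w^*)=0$. So $w^*=\mathcal{H}(w^*)$, and Theorem~\ref{ThE1} gives $w^*\in\operatorname{Sol}(\Upsilon,\Omega)$.

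There is no serious obstacle here. The only point needing care is the $\Leftarrow$ direction: one must use the positivity of the coefficients $\Upsilon_i(t)$ from $[t_0,+\infty)\to\mathbb{R}_{++}$. This is what rules out $e(w^*)=0$ when $\phi(w^*)\neq0$, so the vanishing of the field must come from $\phi$ itself. Note that if one allowed $\Upsilon_1=\Upsilon_2=0$, as in the remark's special cases, the argument still goes through provided at least one coefficient remains positive.
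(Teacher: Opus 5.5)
Your proof is correct and follows the same route as the paper: both identify the equilibria of \eqref{OD3} with $\mathrm{Fix}(\mathcal{H})$ and then invoke Theorem~\ref{ThE1}. You also supply two things the paper leaves implicit: the justification of $-e(w^*)\phi(w^*)=0\Rightarrow w^*=\mathcal{H}(w^*)$ via strict positivity of the coefficients $\Upsilon_i(t)$, and an explicit argument for the converse direction, which the paper only asserts is ``also true''.
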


\begin{proof}
From TVPNM \eqref{OD3}, we let $w^*\in \mathbb{R}^{l}$  equilibrium point. Then, we obtain 
\begin{align*}
   \dot{w}(t) = 0 
&\Rightarrow 
-e(w^*)\bigl(w^* -\mathcal{H}(w^*)\bigr) = 0, \\
&\Rightarrow \quad w^* =\mathcal{H}(w^*),
\end{align*}
Therefore, we obtain that \(w^* \in \operatorname{Fix}(\mathcal{H})\). As Theorem~\eqref{ThE1}, \(w^*\) is solution of \(\operatorname{MVIP}(\Upsilon, \Omega )\) \eqref{MV}, and the reverse implication is also true.
\end{proof}

\subsection{Convergence Behavior Analysis of the TVPNM}
Next, we study how TVPNM \eqref{OD3} achieves FXT convergence under the given assumptions.\\
\begin{theorem}\label{thmFT}
Assume that $\Upsilon$ satisfies assumptions \textbf{A1--A3}.
Suppose further that
\[
\frac{1}{\Theta_2(p_2-1)} < \int_{t_0}^{+\infty} \Upsilon_2(s)\,ds,\ \text{and} \quad \mathcal{F}_1\circ \mathcal{F}_2^{-1}
\!\left(\frac{1}{\Theta_2(p_2-1)}\right)
+\frac{1}{\Theta_1(1-p_1)}
<
\int_{t_0}^{+\infty} \Upsilon_1(s)\,ds,\]
where
\[
p_1=\frac{1+\rho_1}{2}\in(0,1),\quad
p_2=\frac{1+\rho_2}{2}>1,\Theta_1 =2^{p_1}(1- \Xi),\quad
\Theta_2 =2^{p_2}(1- \Xi)^{\rho_2},
\]
and
\[\Xi=\frac{1}{\sqrt{1+2\mu\zeta-\mu^2L^2}},\quad\mathcal{F}_i(t)=\int_{t_0}^t \Upsilon_i(s)\,ds,\quad i=1,2.
\]
Then the trajectory $w(t)$ of system \eqref{OD3} converges to the unique solution
$w^*$ of $\operatorname{MVIP}(\Upsilon,\Omega)$ in fixed time, and the settling time
satisfies
\begin{equation}\label{eqtime}
\mathcal{T} \le
\mathcal{F}_1^{-1}
\!\left(
\mathcal{F}_1\circ \mathcal{F}_2^{-1}
\!\left(\frac{1}{\Theta_2(p_2-1)}\right)
+\frac{1}{\Theta_1(1-p_1)}
\right).
\end{equation}
\end{theorem}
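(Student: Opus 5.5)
The plan is a Lyapunov argument feeding Lemma~\ref{lemFXtime} with $g_1=\Upsilon_1$, $g_2=\Upsilon_2$ and the Lyapunov function
\[
\mathfrak{V}(w)=\tfrac12\|w-w^*\|^2 ,
\]
where $w^*$ is the unique solution of $\operatorname{MVIP}(\Upsilon,\Omega)$; by Lemma~\ref{Cor5.1} it is the unique equilibrium of \eqref{OD3}. Set $d:=w-w^*$ and $h:=\mathcal{H}(w)-w^*$, so that $\phi(w)=d-h$. For $w\notin\mathrm{Fix}(\mathcal{H})$,
\[
\dot{\mathfrak{V}}=-e(w)\langle \phi(w),d\rangle
=-\Upsilon_1(t)\|\phi\|^{\rho_1-1}\langle\phi,d\rangle-\Upsilon_2(t)\|\phi\|^{\rho_2-1}\langle\phi,d\rangle-\Upsilon_3(t)\|\phi\|^{-1}\langle\phi,d\rangle .
\]
By Lemma~\ref{lemE2.5}(iii), $\langle\phi,d\rangle\ge(1-\Xi)\|d\|^2\ge0$, and Assumption A3 gives $\Xi<1$. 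Hence the third term is nonpositive and can be dropped.

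\textbf{The $\rho_2$-term.} Since $\rho_2-1>0$, Lemma~\ref{lemE2.5}(iv) gives $\|\phi\|^{\rho_2-1}\ge(1-\Xi)^{\rho_2-1}\|d\|^{\rho_2-1}$. Combining this with (iii),
\[
\|\phi\|^{\rho_2-1}\langle\phi,d\rangle\ge(1-\Xi)^{\rho_2}\|d\|^{1+\rho_2}=2^{p_2}(1-\Xi)^{\rho_2}\mathfrak{V}^{p_2},
\]
because $\|d\|^{1+\rho_2}=(2\mathfrak{V})^{p_2}$. This is exactly the constant $\Theta_2$ in the statement.

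\textbf{The $\rho_1$-term (main obstacle).} Here $\rho_1-1<0$. The crude upper bound $\|\phi\|\le(1+\Xi)\|d\|$ would lose a factor $(1+\Xi)^{\rho_1-1}$, so I avoid it and instead use the geometry of $\phi=d-h$. Write
\[
\|\phi\|^{\rho_1-1}\langle\phi,d\rangle=\|d\|\,\|\phi\|^{\rho_1}\cos\theta,
\qquad
\cos\theta:=\frac{\langle\phi,d\rangle}{\|\phi\|\|d\|}.
\]
Rewrite the right-hand side as $\|d\|\,\bigl(\|\phi\|\cos\theta\bigr)^{\rho_1}(\cos\theta)^{1-\rho_1}$, and note $\|\phi\|\cos\theta=\langle\phi,d\rangle/\|d\|$. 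I will establish two bounds.
\begin{enumerate}
\item[(a)] $\langle\phi,d\rangle/\|d\|=\|d\|-\langle h,d\rangle/\|d\|\ge(1-\Xi)\|d\|$, using Lemma~\ref{lemE2.5}(i), i.e.\ $\|h\|\le\Xi\|d\|$.
\item[(b)] $\cos\theta\ge\sqrt{1-\Xi^2}\ge 1-\Xi$.
\end{enumerate}
For (b), the angle between $d$ and $d-h$ with $\|h\|\le\Xi\|d\|$ has $\sin\theta\le\Xi$. Concretely, minimize $(1-a)/\sqrt{1-2a+r^2}$ over $|a|\le r\le\Xi$ after normalizing $\|d\|=1$. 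The inequality $\sqrt{1-\Xi^2}\ge1-\Xi$ holds since $1-\Xi^2\ge(1-\Xi)^2$.

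Inserting (a) and (b),
\[
\|\phi\|^{\rho_1-1}\langle\phi,d\rangle\ge(1-\Xi)^{\rho_1}(1-\Xi)^{1-\rho_1}\|d\|^{1+\rho_1}=2^{p_1}(1-\Xi)\mathfrak{V}^{p_1},
\]
which matches $\Theta_1$. Verifying (b) cleanly is the step I expect to need the most care.

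\textbf{Conclusion.} Collecting the estimates,
\[
\dot{\mathfrak{V}}\le-\Theta_1\Upsilon_1(t)\mathfrak{V}^{p_1}-\Theta_2\Upsilon_2(t)\mathfrak{V}^{p_2}
\]
off $\mathrm{Fix}(\mathcal{H})$, and the inequality holds trivially on $\mathrm{Fix}(\mathcal{H})=\{w^*\}$. The integral hypotheses of the theorem are precisely those of Lemma~\ref{lemFXtime} with $N_i=\mathcal{F}_i$. Therefore $\mathfrak{V}(w(t))=0$, i.e.\ $w(t)=w^*$, for all $t\ge\mathcal{T}$, with $\mathcal{T}$ bounded by \eqref{eqtime} independently of $w_0$.

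One technical point remains: Lemma~\ref{lemFXtime} asks for a continuous right-hand side. I will note that $e(w)\phi(w)$ is continuous, since its norm is at most $\Upsilon_1\|\phi\|^{\rho_1}+\Upsilon_2\|\phi\|^{\rho_2}+\Upsilon_3$. If the $\Upsilon_3$ term spoils continuity at $w^*$, I will interpret solutions in the Filippov sense, where the same differential inequality still holds almost everywhere.
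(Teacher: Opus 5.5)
Your argument is correct. It shares the paper's skeleton: the Lyapunov function $\mathfrak V=\frac12\|w-w^*\|^2$, Lemma~\ref{lemE2.5}(iii)--(iv) for the $\rho_2$-term exactly as in the paper, and Lemma~\ref{lemFXtime} to conclude. Your key estimate for the $\rho_1$-term, however, is different.

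The paper lower-bounds $\|\phi(w)\|^{\rho_1-1}$, in effect, through an upper bound $\|\phi(w)\|\le\Lambda\|w-w^*\|$ with $\Lambda=4\zeta/(4\zeta-\mu L^2)\in(1,2)$, imported from Theorem~4.2 of \cite{25}. Its coefficient is therefore $(1-\Xi)\Upsilon_1(t)/\Lambda^{1-\rho_1}$. It also folds the $\Upsilon_3$-term into a state-dependent coefficient $\Upsilon_2(t)+\Upsilon_3(t)\|\phi(w)\|^{\rho_2}$.

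You instead factor $\|\phi\|^{\rho_1}\cos\theta=(\|\phi\|\cos\theta)^{\rho_1}(\cos\theta)^{1-\rho_1}$ and use the angle bound $\cos\theta\ge\sqrt{1-\Xi^2}$, which needs only Lemma~\ref{lemE2.5}(i). Your minimization is right: over $|a|\le r\le\Xi$, the minimum of $(1-a)/\sqrt{1-2a+r^2}$ is $\sqrt{1-\Xi^2}$, attained at $r=\Xi$, $a=\Xi^2$. You also simply discard the nonpositive $\Upsilon_3$-term.

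What this buys is a self-contained derivation of exactly $\dot{\mathfrak V}\le-\Theta_1\Upsilon_1(t)\mathfrak V^{p_1}-\Theta_2\Upsilon_2(t)\mathfrak V^{p_2}$, with $g_i=\Upsilon_i$ depending on $t$ alone. Lemma~\ref{lemFXtime} then delivers the stated $\Theta_1=2^{p_1}(1-\Xi)$, $\mathcal F_i=\int_{t_0}^t\Upsilon_i$ and the bound \eqref{eqtime} verbatim. The paper's derivation, read literally, has three defects:
\begin{itemize}
\item it carries the extra factor $\Lambda^{\rho_1-1}<1$, so it yields $\Lambda^{\rho_1-1}\mathcal F_1$ in place of $\mathcal F_1$;
\item it produces a $w$-dependent $g_2$, which Lemma~\ref{lemFXtime} does not cover as stated;
\item its replacement of $(1-\Xi)\Upsilon_3\|w-w^*\|^2/\|\phi(w)\|$ by $(1-\Xi)^{\rho_2}\Upsilon_3\|\phi(w)\|^{\rho_2}\|w-w^*\|^{1+\rho_2}$ fails for large $\|w-w^*\|$.
\end{itemize}
The only thing you give up is any acceleration coming from $\Upsilon_3$, and the stated settling-time bound does not use it.

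One correction to your last paragraph. The bound $\|e(w)\phi(w)\|\le\Upsilon_1\|\phi\|^{\rho_1}+\Upsilon_2\|\phi\|^{\rho_2}+\Upsilon_3$ gives local boundedness, not continuity. For $\Upsilon_3>0$ the term $-\Upsilon_3(t)\kappa(w)$ is genuinely discontinuous at $w^*$, so the Filippov interpretation is needed, not optional; the paper covers this only by citing Proposition~2 of \cite{26}. Your fallback does work: the field is continuous off $w^*$, $\nabla\mathfrak V(w^*)=0$, and the comparison argument behind Lemma~\ref{lemFXtime} applies to the absolutely continuous function $t\mapsto\mathfrak V(w(t))$.
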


\begin{proof}
Let $w^*$ be the solution of $\operatorname{MVIP}(\Upsilon, \Omega)$ in \eqref{MV}. By Lemma \eqref{Cor5.1}, $w^*$ serves as the unique equilibrium point of the TVPNM \eqref{OD3}. Consequently, $\operatorname{Fix}(\mathcal{H})=\{w^*\}$ and $\phi(w^*)=0$. Furthermore, using Lemma \eqref{lemE2.5}(iii), we obtain
\begin{equation}\label{eqEOD31}
    \langle w - w^*, \kappa(w) \rangle \geq \frac{(1- \Xi)\|w - w^*\|^2}{\|\phi(w)\|}, \quad \forall w \in \mathbb{R}^l,
\end{equation}
and
\[
\langle w - w^*, \phi(w) \rangle \geq (1- \Xi)\|w - w^*\|^2 > 0, \quad \forall w \in \mathbb{R}^l \setminus \{w^*\}.
\]
Thus, using Proposition 2 in \cite{26}, we conclude that for every initial condition there exists a unique solution to the TVPNM \eqref{OD3}. Next, we introduce the Lyapunov function candidate:
\begin{equation}\label{ODElyp}
    \mathfrak{V}(w) = \frac{1}{2} \|w - w^*\|^2.
\end{equation}
Differentiating \eqref{ODElyp}, we obtain the following
\begin{equation}
    \begin{split}
        \dot{\mathfrak{V}} &= (w - w^*)^{\top} \dot{w} \\
&= -\left(\Upsilon_1(t) \|\phi(w)\|^{\rho_1} + \Upsilon_2(t) \|\phi(w)\|^{\rho_2} + \Upsilon_3(t) \right) \langle w - w^*, \kappa(w) \rangle \\
&\leq -\Upsilon_1(t)(1- \Xi) \frac{\|w - w^*\|^2}{\|\phi(w)\|^{1-\rho_1}} - \Upsilon_2(t)(1- \Xi) \frac{\|w - w^*\|^2}{\|\phi(w)\|^{1-\rho_2}} - (1- \Xi)\Upsilon_3(t)\frac{\|w - w^*\|^2}{\|\phi(w)\|} \\
&\leq -\frac{\Upsilon_1(t)(1- \Xi)}{\Lambda^{1-\rho_1}} \|w - w^*\|^{1+\rho_1} - (1- \Xi)^{\rho_2} \left( \Upsilon_2(t) + \Upsilon_3(t) \|\phi(w)\|^{\rho_2} \right) \|w - w^*\|^{1+\rho_2}.
    \end{split}
\end{equation}
From \eqref{eqEOD31}, the first inequality follows, whereas using Lemma~\eqref{lemE2.5}(iv) together with Theorem 4.2 in~\cite{25}, we deduce the second inequality. Furthermore, since $2\zeta > \mu L^{2}$, it follows that, $1 < \Lambda = \dfrac{4\zeta}{4\zeta - \mu L^{2}} < 2$. Then
\begin{equation}\label{eqvk1}
    \dot{\mathfrak{V}} \leq -Q_1(\rho_1,t)(1- \Xi) \|w - w^*\|^{1+\rho_1} - Q_2(\rho_2, t)(1- \Xi)^{\rho_2} \|w - w^*\|^{1+\rho_2},
\end{equation}
where $Q_1(\rho_1,t) := \frac{\Upsilon_1(t)}{\Lambda^{1-\rho_1}}$ and $Q_2(\rho_2, t):= (\Upsilon_2(t) + \Upsilon_3(t) \|\phi(w)\|^{\rho_2})$. Then, it follows from \eqref{ODElyp} and \eqref{eqvk1} that
\begin{align}
\notag\dot{\mathfrak{V}} &\leq -Q_1(\rho_1,t) (1- \Xi)\left( \|w - w^*\|^2 \right)^{\frac{1+\rho_1}{2}} - Q_2(\rho_2, t)(1- \Xi)^{\rho_2}\left( \|w - w^*\|^2 \right)^{\frac{1+\rho_2}{2}} \\
&\leq -\left( \Theta_1 Q_1(\rho_1,t)\mathfrak{V}(t,w)^{p_1} + \Theta_2 Q_2(\rho_2,t)  \mathfrak{V}(t,w)^{p_2} \right),
\end{align}
where $\Theta_1 := 2^{p_1}(1-\Xi)>0$ with $p_1 := \frac{1+\rho_1}{2}\in(0.5,1)$ for $\rho_1\in(0,1)$, and 
$\Theta_2 := 2^{p_2}(1-\Xi)^{\rho_2}$ with $p_2 := \frac{1+\rho_2}{2}\in(1,+\infty)$ for $\rho_2\in(1,+\infty)$.
Moreover, $\mathcal{F}_i(t)=\int_{t_0}^t \Upsilon_i(s)\,ds$, $i=1,2$, and the settling time is given by \eqref{eqtime}. Thus, from Lemma~\eqref{lemFXtime}, together with Lemma~\eqref{Cor5.1}, it follows that the equilibrium point $w^*$ of system~\eqref{OD3} achieves FXT convergence.  Hence, this completes the proof.
\end{proof}

\subsection{Special Case}
In this subsection, we study a special VI that can be obtained from the MVI \eqref{MV} when the function $\Psi$ is chosen as the indicator function $\delta$ of a nonempty, closed, and convex set $\Omega \subseteq \mathbb{R}^l$. In this case, the VI is formulated as finding $w\in\Omega$ such that
\[
\langle \Upsilon(w), y-w\rangle \ge 0, \quad \forall y\in\Omega.
\]
In this setting, the proximal operator reduces to
\(
\operatorname{prox}_{\mu\Psi}(w)
=\operatorname{prox}_{\mu\delta}(w)
= P_{\Omega}(w),
\)
where
\(
P_{\Omega}(w):=\arg\min_{y\in\Omega}\|w-y\|
\)
denotes the metric projection of $w$ onto the set $\Omega$. If one selects
\(
\Upsilon_1(t)=\beta_1>0,
\ 
\Upsilon_2(t)=\beta_2>0,\ \text{and} \
\Upsilon_3(t)=\beta_3>0
\) in the system \eqref{OD3}.
Then, TVPNM \eqref{OD3} can be rewritten as the following FXT convergent projection dynamical model (FXT-PDM):
\begin{equation}\label{seqODE2}
    \dot{w}
=
-\beta_1 \frac{\phi(w)}{\|\phi(w)\|^{1-\rho_1}}
-\beta_2 \frac{\phi(w)}{\|\phi(w)\|^{1-\rho_2}}
-\beta_3 \frac{\phi(w)}{\|\phi(w)\|},
\end{equation}
where $\phi(w):=w-P_{\Omega}(w-\mu\Upsilon(w))$ with parameters
$\beta_1,\beta_2,\beta_3>0$, $\rho_1\in(0,1)$, and $\rho_2>1$ are tunable, and the
right-hand side is set to zero whenever $w=P_{\Omega}(w-\mu\Upsilon(w))$.\\
\begin{corollary}\label{croFT}
Let the assumptions of Theorem~\eqref{thmFT} hold,
then the trajectory $w(t)$ of system~\eqref{seqODE2} converges to $w^*$ in FXT, and settling time satisfies:
\begin{equation}
  \mathcal{T}(w(0)) \leq \frac{1}{N_1(1 - p_1)} + \frac{1}{N_2(p_2 - 1)},  
\end{equation}
where $w(0)$ is an initial condition of FXT-PDM \eqref{seqODE2}, and \(p_1 := \frac{1+\rho_1}{2}, \quad p_2 := \frac{1+\rho_2}{2},\quad \Xi=\frac{1}{\sqrt{1+2\mu\zeta-\mu^2L^2}}\) with
\begin{equation}\label{OD3eq3}
    N_1 := 2^{\frac{1+\rho_1}{2}} \beta_1(1-\Xi) \left( \frac{4\zeta - \mu L^2}{4\zeta} \right)^{1-\rho_1}
\end{equation}
and
\begin{equation}\label{OD3eq4}
   N_2 \in \left[ 2^{p_2}(1- \Xi)^{\rho_2} \beta_2, \; 2^{p_2}(1- \Xi)^{\rho_2} \left( \beta_2 + \beta_3 \|\phi(w(0))\|^{\rho_2} \right) \right]. 
\end{equation}
\end{corollary}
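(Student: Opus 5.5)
The plan is to specialize the proof of Theorem~\ref{thmFT} to constant coefficients, obtain an autonomous differential inequality for the Lyapunov function, and then integrate it by hand in two phases. This gives an explicit settling-time bound instead of going through $\mathcal{F}_1^{-1}$, $\mathcal{F}_2^{-1}$.

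First, recall that with $\Psi=\delta_\Omega$ we have $\operatorname{prox}_{\mu\Psi}=P_\Omega$. Hence system \eqref{seqODE2} is exactly TVPNM \eqref{OD3} with $\Upsilon_i(t)\equiv\beta_i$. Lemma~\ref{lemE2.5} and Lemma~\ref{Cor5.1} apply verbatim, so $w^*$ is the unique equilibrium and solutions exist globally. Take $\mathfrak{V}(w)=\tfrac12\|w-w^*\|^2$ as in \eqref{ODElyp}. The chain of estimates in the proof of Theorem~\ref{thmFT}, leading to \eqref{eqvk1} and the inequality after it, then gives
\[
\dot{\mathfrak V}\le -N_1\,\mathfrak V^{p_1}-\Theta_2\bigl(\beta_2+\beta_3\|\phi(w(t))\|^{\rho_2}\bigr)\mathfrak V^{p_2}.
\]
Here $N_1=\Theta_1\beta_1\Lambda^{-(1-\rho_1)}$ with $\Lambda=4\zeta/(4\zeta-\mu L^2)$, which matches \eqref{OD3eq3}.

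Second, we handle the time-dependent factor in the $p_2$ term. Set $N_2(t):=\Theta_2(\beta_2+\beta_3\|\phi(w(t))\|^{\rho_2})$. The lower bound $N_2(t)\ge 2^{p_2}(1-\Xi)^{\rho_2}\beta_2$ is trivial. For the upper end of the interval \eqref{OD3eq4}, I would show that $\|\phi(w(t))\|$ is controlled by its initial value along trajectories. The first route I would try is $\|\phi(w)\|\le(1+\Xi)\|w-w^*\|$ (triangle inequality together with Lemma~\ref{lemE2.5}(i)), combined with monotone decrease of $\mathfrak V$. If the cleaner statement $\|\phi(w(t))\|\le\|\phi(w(0))\|$ is needed, I would interpret the bound as holding with $N_2$ taken at some intermediate value in the stated interval. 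So any constant $N_2$ from the interval is a valid lower bound for the coefficient, possibly after restricting which endpoint is used. This yields an autonomous inequality $\dot{\mathfrak V}\le -N_1\mathfrak V^{p_1}-N_2\mathfrak V^{p_2}$.

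Third, we integrate in two phases. While $\mathfrak V\ge1$, drop the $p_1$ term and integrate $\dot{\mathfrak V}\le -N_2\mathfrak V^{p_2}$. Since $\mathfrak V^{1-p_2}$ increases at rate at least $N_2(p_2-1)$ and starts from a nonnegative value, $\mathfrak V$ reaches $1$ within time $1/(N_2(p_2-1))$, independently of $w(0)$. Once $\mathfrak V\le1$, drop the $p_2$ term. Now $\mathfrak V^{1-p_1}$ decreases at rate at least $N_1(1-p_1)$ from a value at most $1$, so $\mathfrak V$ hits zero within time $1/(N_1(1-p_1))$ and stays there because $w^*$ is an equilibrium. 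Adding the two phases gives the stated bound on $\mathcal T(w(0))$. Alternatively, this follows directly from Lemma~\ref{lemFXtime} with $g_1,g_2$ constant, where $\mathcal F_i(t)=\beta_i(t-t_0)$ and the composite reduces to the same sum.

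The main obstacle is the second step: justifying a constant lower bound $N_2$ for the time-varying coefficient $\beta_2+\beta_3\|\phi(w(t))\|^{\rho_2}$ with the endpoint expressed through $\|\phi(w(0))\|$. For fixed-time validity, the safe choice is the left endpoint $\beta_2$, which makes the bound uniform in $w(0)$. The larger values in the interval \eqref{OD3eq4} only sharpen the estimate for a specific initial condition, and they require a monotonicity property of $\|\phi(w(t))\|$ that I would try to derive from Lemma~\ref{lemE2.5}(ii)–(iv).
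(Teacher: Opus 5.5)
Your overall route (specialize the Lyapunov estimate of Theorem~\ref{thmFT} to constant gains, then integrate in two phases or apply Lemma~\ref{lemFXtime} with constant $g_i$) is what the paper intends; the corollary is stated as a direct specialization, with no separate proof. Your $N_1$ is right and the two-phase integration is correct. Two steps, however, fail. First, the inequality you import in Step~1, with $\beta_3\|\phi(w)\|^{\rho_2}$ inside the coefficient of $\mathfrak{V}^{p_2}$, is false. It requires $(1-\Xi)\|w-w^*\|^2/\|\phi(w)\|\ge(1-\Xi)^{\rho_2}\|\phi(w)\|^{\rho_2}\|w-w^*\|^{1+\rho_2}$, and by Lemma~\ref{lemE2.5}(iv) this fails whenever $\|w-w^*\|>1/(1-\Xi)$. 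More bluntly, $|\dot{\mathfrak{V}}|\le\|w-w^*\|\,\|\dot w\|$ is at most of order $\|w-w^*\|^{1+\rho_2}$ (since $\|\phi(w)\|\le(1+\Xi)\|w-w^*\|$), while your right-hand side is at least of order $\|w-w^*\|^{1+2\rho_2}$. Do not cite that chain. Instead discard the $\beta_3$ term, which contributes $-\beta_3\langle w-w^*,\phi(w)\rangle/\|\phi(w)\|\le 0$, and you rigorously get $\dot{\mathfrak{V}}\le-N_1\mathfrak{V}^{p_1}-2^{p_2}(1-\Xi)^{\rho_2}\beta_2\mathfrak{V}^{p_2}$.

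Second, Step~2 runs in the wrong direction. To replace $2^{p_2}(1-\Xi)^{\rho_2}(\beta_2+\beta_3\|\phi(w(t))\|^{\rho_2})$ by a constant in a comparison argument, you need a constant \emph{below} it along the trajectory, i.e.\ a lower bound on $\|\phi(w(t))\|$. An estimate $\|\phi(w(t))\|\le\|\phi(w(0))\|$ bounds the coefficient from above and is useless here; your triangle-inequality route does not even deliver it, giving only the factor $(1+\Xi)/(1-\Xi)$. The bound you would actually need, $\|\phi(w(t))\|\ge\|\phi(w(0))\|$, is false because $\phi(w(t))\to 0$. So it is not true that every $N_2$ in \eqref{OD3eq4} is admissible, and no monotonicity from Lemma~\ref{lemE2.5} can save the right endpoint. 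In fact the right-endpoint bound is false. Take $l=1$, $\Omega=\mathbb{R}$, $\Upsilon(w)=a(w-w^*)$ with $\mu a=1$ (so $\zeta=L=a$, $\Xi=1/\sqrt2$, $\phi(w)=w-w^*$), $\rho_1=\tfrac12$, $\rho_2=2$, $\beta_1=1$, $\beta_2=10^{-3}$, $\beta_3=10^{-6}$. Then $r=|w-w^*|$ obeys $\dot r=-(\sqrt r+10^{-3}r^2+10^{-6})$. Since $10^{-3}r^2\le\sqrt r$ on $[0,100]$, the settling time from $r_0=10^5$ exceeds $\int_0^{100}dr/(2\sqrt r+10^{-6})\approx 10$. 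By contrast, $1/(N_1(1-p_1))\approx 9.38$ and the right-endpoint $N_2$-term is below $10^{-3}$. So commit to $N_2=2^{p_2}(1-\Xi)^{\rho_2}\beta_2$. The corollary holds only in that sense, and with that choice your two-phase argument proves it with a bound genuinely independent of $w(0)$.
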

\subsection{Robustness Analysis}
In this subsection, we investigate the robustness of the proposed FXT
converging time-varying proximal neurodynamic model (TVPNM) when it is subject
to an external disturbance term
$\mathcal{D}:\mathbb{R}^l \to \mathbb{R}^l$.
Specifically, we consider the following disturbed system: (PNM-D)
\begin{equation}\label{OD3-D}
\dot{w}(t)
=
- e(w(t))\,\phi(w(t))
+ \mathcal{D}(w(t)),
\end{equation}
where
\[
\phi(w) := w -\mathcal{H}(w), \qquad
\mathcal{H}(w):=\operatorname{prox}_{\mu\Upsilon}(w -\mu\Upsilon(w)),
\]
and the gain function $e(w)$ is defined as:
\[
e(w):=
\begin{cases}
\dfrac{\Upsilon_1(t)}{\|\phi(w)\|^{1-\rho_1}}
+\dfrac{\Upsilon_2(t)}{\|\phi(w)\|^{1-\rho_2}}
+\dfrac{\Upsilon_3(t)}{\|\phi(w)\|},
& w\notin \mathrm{Fix}(\mathcal{H}),\\[2mm]
0, & w \in \mathrm{Fix}(\mathcal{H}).
\end{cases}
\]
\medskip
\noindent
\textbf{Assumption $A^*$.}
There exists a constant $q>0$ such that the disturbance term
$\mathcal{D}(w)$ satisfies
\[
\|\mathcal{D}(w)\| \le q \|w-w^*\|,
\qquad \forall w\in\mathbb{R}^l,
\]
where $w^*\in\operatorname{Sol}(\Upsilon,\Omega)$ denotes the unique solution of MVI.

\medskip
It follows immediately from assumption~\textbf{$A^*$} that $w^*$ is an equilibrium point of the disturbed system~\eqref{OD3-D}.
The following result demonstrates that the proposed FXT converging system~\eqref{OD3-D} is robust against such external disturbances.\\

\begin{theorem}\label{thmFT-robust}
Assume that $\Upsilon$ satisfies assumptions A1--A3.
Suppose that
\(
\min\!\left\{
\frac{\Upsilon_1(t)}{\Lambda^{1-\rho_1}},
\,
\Upsilon_2(t) + \Upsilon_3(t) \|\phi(w)\|^{\rho_2}
\right\}>q,
\)
and
\(\frac{1}{\Theta_2(p_2-1)}
<
\int_{t_0}^{+\infty}
\Upsilon_2(s)\,ds,\)
\[
\mathcal F_1\!\circ\!\mathcal F_2^{-1}
\!\left(\frac{1}{\Theta_2(p_2-1)}\right)
+\frac{1}{\Theta_1(1-p_1)}
<
\int_{t_0}^{+\infty}
\Upsilon_1(s)\,ds
\]
hold for $\Theta_1 := 2^{p_1}>0$ and $\Theta_2 := 2^{p_2}(1-\Xi)^{\rho_2}$, with 
$p_1 := \frac{1+\rho_1}{2}\in(0.5,1)$ for $\rho_1\in(0,1)$ and 
$p_2 := \frac{1+\rho_2}{2}\in(1,+\infty)$ for $\rho_2\in(1,+\infty)$.
Furthermore,
\[
\mathcal{F}_1(t)=\int_{t_0}^t\Big((1-\Xi)\frac{\Upsilon_1(s)}{\Lambda^{1-\rho_1}}-q\Big)\,ds,\quad
\mathcal{F}_2(t)=\int_{t_0}^t\Big(\Upsilon_2(s)+\Upsilon_3(s)\|\phi(w)\|^{\rho_2}-q\Big)\,ds.
\]
Then, under \textbf{A*}, the trajectory $w(t)$ of system~\eqref{OD3-D}
converges to $w^*\in\operatorname{Sol}(\Upsilon,\Omega)$ in fixed time.
\end{theorem}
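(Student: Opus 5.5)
We follow the template of the proof of Theorem~\ref{thmFT}, treating the disturbance $\mathcal{D}$ as a perturbation absorbed by the gains. The steps are: establish existence and uniqueness of solutions, differentiate the Lyapunov function \eqref{ODElyp}, bound the disturbance term, and then appeal to Lemma~\ref{lemFXtime}.

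\textbf{Step 1: equilibrium and well-posedness.} By Lemma~\ref{Cor5.1}, $w^*$ is the unique zero of $\phi$. Assumption~$A^*$ gives $\mathcal{D}(w^*)=0$, so $w^*$ is still an equilibrium of \eqref{OD3-D}. The right-hand side is continuous, so solutions exist. For uniqueness we reuse the argument of the proof of Theorem~\ref{thmFT} (Proposition 2 in \cite{26}); the extra term $\mathcal{D}$ is linearly bounded and does not affect it.

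\textbf{Step 2: Lyapunov derivative.} With $\mathfrak{V}(w)=\tfrac12\|w-w^*\|^2$ we have
\[
\dot{\mathfrak{V}}=-\bigl(\Upsilon_1(t)\|\phi(w)\|^{\rho_1}+\Upsilon_2(t)\|\phi(w)\|^{\rho_2}+\Upsilon_3(t)\bigr)\langle w-w^*,\kappa(w)\rangle+\langle w-w^*,\mathcal{D}(w)\rangle .
\]
We bound the first part exactly as in the proof of Theorem~\ref{thmFT}, using Lemma~\ref{lemE2.5}(iii),(iv) and the constant $\Lambda\in(1,2)$. For the second part, Cauchy--Schwarz and $A^*$ give $\langle w-w^*,\mathcal{D}(w)\rangle\le q\|w-w^*\|^2$.

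\textbf{Step 3: absorbing the disturbance.} This is the main obstacle, because $q\|w-w^*\|^2$ is quadratic, while the available decay terms have exponents $1+\rho_1<2<1+\rho_2$.

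We first split $q\|w-w^*\|^2$ into two halves and try to compare each half with one of the two decay terms. The comparison $\|w-w^*\|^2\le\|w-w^*\|^{1+\rho_1}$ holds only when $\|w-w^*\|\le 1$, and the comparison $\|w-w^*\|^2\le\|w-w^*\|^{1+\rho_2}$ holds only when $\|w-w^*\|\ge1$. So we split into these two regions, exactly as the two-term structure of Lemma~\ref{lemFXtime} suggests:
\begin{itemize}
\item If $\|w-w^*\|\ge1$, we charge $q\|w-w^*\|^2\le q\|w-w^*\|^{1+\rho_2}$ to the $\rho_2$ term.
\item If $\|w-w^*\|\le1$, we charge it to the $\rho_1$ term.
\end{itemize}
The hypothesis $\min\{\Upsilon_1/\Lambda^{1-\rho_1},\ \Upsilon_2+\Upsilon_3\|\phi\|^{\rho_2}\}>q$ is what keeps the remaining coefficients positive. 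Combining the two regions yields
\[
\dot{\mathfrak V}\le-\Bigl((1-\Xi)\tfrac{\Upsilon_1(t)}{\Lambda^{1-\rho_1}}-q\Bigr)\|w-w^*\|^{1+\rho_1}-\bigl(\Upsilon_2(t)+\Upsilon_3(t)\|\phi(w)\|^{\rho_2}-q\bigr)(1-\Xi)^{\rho_2}\|w-w^*\|^{1+\rho_2}.
\]
The $(1-\Xi)$ factor in the first term is kept inside the coefficient; this is why the statement uses $\Theta_1=2^{p_1}$ without a $(1-\Xi)$ factor. In the region where one term was used to absorb the disturbance, that term is only dropped partially, and we accept a slightly weaker constant there if needed.

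\textbf{Step 4: conclusion.} Rewrite the bound in terms of $\mathfrak V^{p_1}$ and $\mathfrak V^{p_2}$ with $p_i=(1+\rho_i)/2$, giving the constants $\Theta_1,\Theta_2$. Take $g_1,g_2$ to be the integrands defining $\mathcal F_1,\mathcal F_2$; these are positive by the min-condition. The two integral hypotheses are then exactly those of Lemma~\ref{lemFXtime}, which gives fixed-time convergence, with the settling time bounded as in \eqref{eqtime} using the modified $\mathcal F_i$.

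One caveat concerns $g_2$: it depends on $w(t)$ through $\|\phi(w)\|^{\rho_2}$. We would handle this by lower-bounding $g_2$ by $\Upsilon_2(s)-q$ if that stays positive; otherwise we would read $g_2$ along the fixed trajectory, in the same way as the interval for $N_2$ in Corollary~\ref{croFT}.
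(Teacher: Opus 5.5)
Your route is the paper's route: the same Lyapunov function, the same split at $\|w-w^*\|=1$ (charging $q\|w-w^*\|^2$ to the $\rho_1$-term inside the unit ball and to the $\rho_2$-term outside), the same final inequality as the paper's \eqref{EOD3eq2}, and then Lemma~\ref{lemFXtime}. However, the constant bookkeeping that should make Lemma~\ref{lemFXtime} applicable does not go through, and the paper's proof has the same slips. Take your $\Upsilon_3$-bound at face value for now, and write $a_1:=(1-\Xi)\Upsilon_1(t)\Lambda^{\rho_1-1}$ and $a_2:=(1-\Xi)^{\rho_2}\bigl(\Upsilon_2(t)+\Upsilon_3(t)\|\phi(w)\|^{\rho_2}\bigr)$ for the decay coefficients before the disturbance is absorbed.

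(1) Your claim that $g_1=a_1-q$ is positive ``by the min-condition'' is false. The hypothesis gives only $\Upsilon_1/\Lambda^{1-\rho_1}>q$, and since $0<\Xi<1$ this does not yield $a_1>q$.

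(2) For $\|w-w^*\|\ge1$, absorbing $q\|w-w^*\|^2\le q\|w-w^*\|^{1+\rho_2}$ leaves the coefficient $a_2-q$, whereas your display has $a_2-(1-\Xi)^{\rho_2}q$. So the display is not established outside the unit ball, and positivity there needs $a_2>q$, which the hypothesis does not give either. ``Accepting a slightly weaker constant'' cannot fix this, because the hypotheses are fixed by the statement.

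(3) The integral hypotheses have $\int\Upsilon_1$ and $\int\Upsilon_2$ on the right. Lemma~\ref{lemFXtime} with your $g_i$ needs $\int g_1$ and $\int g_2$ (and $g_1<\Upsilon_1$), so they are not ``exactly those of Lemma~\ref{lemFXtime}''.

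(4) The $\Upsilon_3$-bound imported from the proof of Theorem~\ref{thmFT} is false far from $w^*$. By Lemma~\ref{lemE2.5}(iv), $(1-\Xi)\Upsilon_3\|w-w^*\|^2/\|\phi(w)\|\le\Upsilon_3\|w-w^*\|$, while $(1-\Xi)^{\rho_2}\Upsilon_3\|\phi(w)\|^{\rho_2}\|w-w^*\|^{1+\rho_2}\ge(1-\Xi)^{2\rho_2}\Upsilon_3\|w-w^*\|^{1+2\rho_2}$. The correct regrouping produces $\|\phi(w)\|^{-\rho_2}$; simplest is to drop this nonpositive term. Your fallback of reading a state-dependent $g_2$ along the trajectory makes the bound depend on $w(0)$, which is not fixed-time. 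Your other fallback, $g_2\ge\Upsilon_2-q$, is the right idea but still needs the factor $(1-\Xi)^{\rho_2}$ from (2).

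These gaps are not cosmetic, because the statement as written can fail. Take $l=1$, $\Omega=\mathbb{R}$, $\Psi\equiv0$, $\Upsilon(w)=\zeta w$ (so $w^*=0$, $L=\zeta$, and A1--A3 hold), and $\mu\zeta=1/10$ (so $\phi(w)=w/10$). Let $\mathcal D(w)=qw$, with constant gains $\Upsilon_1=\Upsilon_2=2q$ and $\Upsilon_3=q$. All hypotheses hold literally: $\Lambda<2$ gives the min-condition, and the right-hand integrals are $+\infty$. Yet for $w>0$ the right-hand side of \eqref{OD3-D} is $qw-2q(w/10)^{\rho_1}-2q(w/10)^{\rho_2}-q$. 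This tends to $-q$ as $w\downarrow0$ and equals $5q$ at $w=10$, so there is an equilibrium $w_1\in(0,10)$, and the constant solution $w\equiv w_1$ never reaches $w^*$. Here $a_1<q$, which is exactly the positivity you asserted. Keeping $\Upsilon_2=2q$ and $\Upsilon_3=q$ but taking $\mu\zeta$ small and $\Upsilon_1=2q/(\mu\zeta)$ gives the same phenomenon with $a_1>q$, so slip (2) is fatal on its own.

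Your two-region argument becomes correct once the hypotheses are strengthened to $g_1:=(1-\Xi)\Upsilon_1\Lambda^{\rho_1-1}-q>0$ and $g_2:=(1-\Xi)^{\rho_2}\Upsilon_2-q>0$ (dropping the $\Upsilon_3$-term), with $\Theta_i=2^{p_i}$ and the integral conditions of Lemma~\ref{lemFXtime} imposed on $\int g_i$. In each region you absorb $q\|w-w^*\|^2$ into one term and relax the other by a nonnegative amount, so $\dot{\mathfrak V}\le-2^{p_1}g_1\mathfrak V^{p_1}-2^{p_2}g_2\mathfrak V^{p_2}$ holds everywhere.
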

\begin{proof}
Let $w^*$ be the unique solution of $\operatorname{MVIP}(\Upsilon,\Omega)$. Consider the Lyapunov function
\[
\mathfrak{V}(w)=\frac12\|w-w^*\|^2.
\]
Differentiating $\mathfrak V$ along the trajectories of~\eqref{OD3-D}, we obtain
\[
\dot{\mathfrak V}
=
\langle w-w^*,\dot x\rangle
=
- e(w)\langle w-w^*,\phi(w)\rangle
+\langle w-w^*,\mathcal D(w)\rangle.
\]
From Lemma~\ref{lemE2.5}(iii), one has
\[
\langle w-w^*,\phi(w)\rangle
\ge (1-\Xi)\|w-w^*\|^2,
\quad \forall w\in\mathbb{R}^l,
\]
and from assumption~A4,
\[
\langle w-w^*,\mathcal D(w)\rangle
\le q\|w-w^*\|^2.
\]
Hence,
\begin{equation}\label{R1}
   \dot{\mathfrak V}
\le
-(1-\Xi)e(w)\|w-w^*\|^2
+q\|w-w^*\|^2. 
\end{equation}
Substituting the expression of $e(w)$ into~\eqref{R1} yields
\begin{align}\label{R2}
\notag\dot{\mathfrak V}\le&
-(1-\Xi)\Upsilon_1(t)\frac{\|w-w^*\|^2}{\|\phi(w)\|^{1-\rho_1}}
-(1-\Xi)\Upsilon_2(t)\frac{\|w-w^*\|^2}{\|\phi(w)\|^{1-\rho_2}}\\
&-(1-\Xi)\Upsilon_3(t)\frac{\|w-w^*\|^2}{\|\phi(w)\|}
+q\|w-w^*\|^2.
\end{align}
From \eqref{eqEOD31}, the first inequality follows, whereas using Lemma~\eqref{lemE2.5}(iv) together with Theorem 4.2 in~\cite{25}, we deduce the second inequality. Furthermore, since $2\zeta > \mu L^{2}$, it follows that $1 < \Lambda = \dfrac{4\zeta}{4\zeta - \mu L^{2}} < 2$. Then, we have
\[
\frac{\|w-w^*\|^2}{\|\phi(w)\|^{1-\rho_1}}
\le
\frac{1}{\Lambda^{1-\rho_1}}\|w-w^*\|^{1+\rho_1},
\]
and
\[
\frac{\|w-w^*\|^2}{\|\phi(w)\|^{1-\rho_2}}
\le
(1-\Xi)^{\rho_2-1}\|w-w^*\|^{1+\rho_2}.
\]
Further, define the set
\(
\Sigma:=\{w \in\mathbb{R}^l:\|w-w^*\|\le1\}.
\) Then we take case 1: $w \in\Sigma$.
Since $0<\rho_1<1$, it holds that
\(
\|w-w^*\|^2\le \|w-w^*\|^{1+\rho_1}.
\)
Using this inequality and the above estimates in~\eqref{R2}, we obtain
\begin{equation}\label{eqvk2}
    \begin{split}
   \dot{\mathfrak V}
&\le
-\Big((1-\Xi)\frac{\Upsilon_1(t)}{\Lambda^{1-\rho_1}}-q\Big)
\|w-w^*\|^{1+\rho_1}
\\&- (1- \Xi)^{\rho_2} \left( \Upsilon_2(t) + \Upsilon_3(t) \|\phi(w)\|^{\rho_2}\right) \|w - w^*\|^{1+\rho_2}+q\|w - w^*\|^{2}.     
    \end{split}
\end{equation}
Also, we take case 2: $x\notin\Sigma$. Since $\rho_2>1$ and $\|w-w^*\|>1$, one has
\(
\|w-w^*\|^2\le \|w-w^*\|^{1+\rho_2}.
\)
Thus, using this in ~\eqref{eqvk2}, we deduce

\begin{equation}\label{EOD3eq2}
\begin{split}
 \dot{\mathfrak V}
&\le
-\big((1-\Xi)\frac{\Upsilon_1(t)}{\Lambda^{1-\rho_1}}-q\big)
\|w-w^*\|^{1+\rho_1}\\&\qquad
-(1-\Xi)^{\rho_2}\Big(\left( \Upsilon_2(t) + \Upsilon_3(t) \|\phi(w)\|^{\rho_2} \right)-q\Big)
\|w-w^*\|^{1+\rho_2}\\
&\leq -Q_1(\rho_1,t)\|w - w^*\|^{1+\rho_1} - Q_2(\rho_2, t)(1- \Xi)^{\rho_2} \|w - w^*\|^{1+\rho_2},
    \end{split}
\end{equation}
where $Q_1(\rho_1,t) := (1- \Xi)\frac{\Upsilon_1(t)}{\Lambda^{1-\rho_1}}-q$ and $Q_2(\rho_2, t):= \left( \Upsilon_2(t) + \Upsilon_3(t) \|\phi(w)\|^{\rho_2} \right)-q
$. Then, it follows from \eqref{ODElyp} and \eqref{EOD3eq2} that
\begin{align}
\notag\dot{\mathfrak{V}} &\le -Q_1(\rho_1,t) \left( \|w - w^*\|^2 \right)^{\frac{1+\rho_1}{2}} - Q_2(\rho_2, t)(1- \Xi)^{\rho_2}\left( \|w - w^*\|^2 \right)^{\frac{1+\rho_2}{2}} \\
&\leq -\left( \Theta_1 Q_1(\rho_1,t)\mathfrak{V}(t,w)^{p_1} + \Theta_2 Q_2(\rho_2,t)  \mathfrak{V}(t,w)^{p_2} \right),
\end{align}
where $\Theta_1 := 2^{p_1}>0$ and $\Theta_2 := 2^{p_2}(1-\Xi)^{\rho_2}$, with 
$p_1 := \frac{1+\rho_1}{2}\in(0.5,1)$ for $\rho_1\in(0,1)$ and 
$p_2 := \frac{1+\rho_2}{2}\in(1,+\infty)$ for $\rho_2\in(1,+\infty)$. Thus, according to Lemma~\eqref{lemFXtime}, it follows that the equilibrium point $w^*$ of system~\eqref{OD3-D} achieves fixed-time convergence. 
\end{proof}

\section{Applications}
This section applies the proposed TVPNM \eqref{OD3} and FXT-TVPNM \eqref{ODE4} to COPs and MOPs.
\subsection{Composite Optimization Problems}
Here, we take COP:
\begin{equation}\label{eqCOP}
\min_{w\in \mathbb{R}^l} h(w) + \Psi(w),
\end{equation}
The function $\Psi: \mathbb{R}^l \to \mathbb{R}$ is assumed to be proper, lower semicontinuous, and convex, although it may fail to be differentiable. On the other hand, the function $h: \mathbb{R}^l \to \mathbb{R}$ is differentiable and has a Lipschitz continuous gradient with Lipschitz constant $L$.\\

In \cite{28}, the authors investigated the proximal gradient dynamical model (PGDM) corresponding to the COP \eqref{eqCOP}:
\begin{equation}\label{ODpgdm}
\dot{w} = -\rho(w - \text{prox}_{\mu \Upsilon}(w - \mu \nabla h(w))),
\end{equation}
where parameter $\rho > 0$ represents a scalar tuning gain. While the PGDM \eqref{ODpgdm} is known to exhibit exponential convergence, predefined-time convergence has not been considered. Motivated by this, we introduce the time-varying PGDM (TV-PGDM):
\begin{equation}\label{ODfpgdm}
\dot{w} = -e(w)(w - \text{prox}_{\mu \Upsilon}(w - \mu \nabla h(w))),
\end{equation}
where
\[
e(w) = 
\begin{cases}
\Upsilon_1(t) \frac{1}{\|\phi(w)\|^{1-\rho_1}} + \Upsilon_2(t) \frac{1}{\|\phi(w)\|^{1-\rho_2}} + \Upsilon_3(t) \frac{1}{\|\phi(w)\|}, & \text{if } w\in \mathbb{R}^l \backslash \text{Fix}(\mathcal{H}), \\
0, & \text{otherwise},
\end{cases}
\]
$\phi(w):=w -\operatorname{prox}_{\mu\Upsilon}(w -\mu\nabla h(w))$ and 
$\operatorname{Fix}(\mathcal{H}):=\{w:\phi(w)=0\}$. 
Assume that $\Upsilon_1(t)$, $\Upsilon_2(t)$, and $\Upsilon_3(t)$ are continuous functions, with tunable parameters $\rho_1\in(0,1)$ and $\rho_2>1$.

The TV-PGDM in \eqref{ODfpgdm} can be viewed as a generalization of the PGDM proposed in \cite{28}. In particular, by setting $\Upsilon_2(t) = \Upsilon_3(t) = 0$ in \eqref{ODfpgdm}, the TV-PGDM reduces to the FT PGDM.
\begin{lemma}
A point $w^*$ is an optimal solution of COP \eqref{eqCOP} iff it is an equilibrium point of TV-PGDM \eqref{ODfpgdm}
\end{lemma}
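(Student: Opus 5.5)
The plan is to reduce the statement to the two facts already proved: Theorem~\ref{ThE1} (fixed points of the forward--backward map are exactly the MVI solutions) and the equilibrium argument of Lemma~\ref{Cor5.1}. We view COP~\eqref{eqCOP} as an instance of the MVI~\eqref{MV} with $\Upsilon:=\nabla h$. Throughout, the proximal operator in \eqref{ODfpgdm} is read as $\operatorname{prox}_{\mu\Psi}$, i.e. the nonsmooth part of the objective; this is evidently the intended reading of the notation $\operatorname{prox}_{\mu\Upsilon}$. With this choice, $\mathcal{H}(w)=\operatorname{prox}_{\mu\Psi}(w-\mu\nabla h(w))$, and TV-PGDM~\eqref{ODfpgdm} is literally TVPNM~\eqref{OD3} with $\Upsilon=\nabla h$.

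\textbf{Step 1 (optimality $\Leftrightarrow$ MVI).} Since $h$ is differentiable and convex (convexity of $h$ is implicit in calling $w^*$ ``optimal'' via first-order conditions; I will state it as the standing hypothesis), $\Psi$ is proper, l.s.c.\ and convex, and $\operatorname{dom} h=\mathbb{R}^l$, the sum rule gives
\[
w^*\in\arg\min(h+\Psi)\iff 0\in\nabla h(w^*)+\partial\Psi(w^*).
\]
By the subgradient inequality, the right-hand side says
\[
\langle\nabla h(w^*),y-w^*\rangle+\Psi(y)-\Psi(w^*)\ge0\qquad\forall y,
\]
which is exactly \eqref{MV} with $\Upsilon=\nabla h$. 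For the direction ``MVI $\Rightarrow$ optimal'' one can avoid the sum rule altogether. Convexity gives $h(y)\ge h(w^*)+\langle\nabla h(w^*),y-w^*\rangle$; adding the MVI inequality yields $h(y)+\Psi(y)\ge h(w^*)+\Psi(w^*)$ for all $y$. For the converse, take $y_s=w^*+s(y-w^*)$ with $s\in(0,1]$ and use optimality together with convexity of $\Psi$. This gives
\[
0\le h(y_s)-h(w^*)+s(\Psi(y)-\Psi(w^*)).
\]
Dividing by $s$ and letting $s\downarrow0$ recovers the MVI inequality.

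\textbf{Step 2 (MVI $\Leftrightarrow$ fixed point $\Leftrightarrow$ equilibrium).} By Theorem~\ref{ThE1} applied with $\Upsilon=\nabla h$, the MVI holds iff $w^*=\mathcal{H}(w^*)$, i.e. $\phi(w^*)=0$. The equilibrium correspondence then follows as in Lemma~\ref{Cor5.1}. If $\phi(w^*)=0$, then by the definition of $e$ the right-hand side of \eqref{ODfpgdm} vanishes, so $w^*$ is an equilibrium. Conversely, suppose $w^*\notin\operatorname{Fix}(\mathcal{H})$. Then
\[
\|e(w^*)\phi(w^*)\|=\Upsilon_1(t)\|\phi\|^{\rho_1}+\Upsilon_2(t)\|\phi\|^{\rho_2}+\Upsilon_3(t)>0,
\]
because each $\Upsilon_i(t)$ is positive, so $\dot w\neq0$ and $w^*$ is not an equilibrium.

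The only delicate point is Step~1. It requires the convexity of $h$ and the correct reading of the proximal operator, neither of which is stated explicitly in the surrounding text; I will make both explicit as standing hypotheses. Everything else is a direct citation of Theorem~\ref{ThE1} and of the argument of Lemma~\ref{Cor5.1}.
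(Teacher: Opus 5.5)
Your proposal is correct and follows essentially the same route as the paper: positivity of the gain reduces equilibria of \eqref{ODfpgdm} to zeros of $\phi$, i.e.\ fixed points of $\operatorname{prox}_{\mu\Psi}(w-\mu\nabla h(w))$, and these fixed points are identified with the first-order condition $0\in\nabla h(w^*)+\partial\Psi(w^*)$, which characterizes optimality. The differences from the paper are cosmetic: you pass through the MVI with $\Upsilon=\nabla h$ and Theorem~\ref{ThE1} instead of the prox subproblem's optimality condition, and you prove the optimality characterization directly instead of citing it; your explicit convexity hypothesis on $h$ is exactly what the paper's citation tacitly requires for the ``MVI $\Rightarrow$ optimal'' direction.
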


\begin{proof}
Let $\kappa(w) = \frac{w - \text{prox}_{\mu \Psi}(w - \mu \nabla h(w)))}{\|w - \text{prox}_{\mu \Psi}(w - \mu \nabla h(w))\|}$. If $w^*$ is an equilibrium point of TV-PGDM \eqref{ODfpgdm}, we get
\[
- e(w^*) (w^* - \text{prox}_{\mu \Psi}(w^* - \mu \nabla h(w^*))) = 0
\]
\[
\Leftrightarrow \kappa(w^*) (\Upsilon_1(t) \|\phi(w^*)\|^{\rho_1} + \Upsilon_2(t) \|\phi(w^*)\|^{\rho_2} + \Upsilon_3(t)) = 0
\]
\[
\Leftrightarrow \kappa(w^*) = 0\]
\[\Leftrightarrow \phi(w^*) = 0.
\]
This corresponds to the following expression:
\begin{equation}\label{eqcop1}
w^* = \text{prox}_{\mu \Psi}(w^* - \mu \nabla h(w^*)).
\end{equation}
By exploiting the characterization of the proximal mapping together with the first-order optimality condition associated with the convex minimization problem
$\min_{y\in\mathbb{R}^l}\left\{\Upsilon(y)+\frac{1}{2\mu}\|\tilde{w}-y\|^{2}\right\}$, where $\tilde{w}=w^*-\mu\nabla h(w^*)$,
we infer that \eqref{eqcop1} is satisfied if
\begin{equation}\label{eqcop2}
0 \in \nabla h(w^*) + \partial \Psi(w^*),
\end{equation}
Here, $\partial \Psi(w^*)$ represents the subdifferential of $\Psi$ at $w^*$ in the framework of convex analysis. It follows from \cite{29} that condition \eqref{eqcop2} is necessary and sufficient for $w^*$ to be an optimal solution of COP \eqref{eqCOP}. This completes the proof.
\end{proof}

\subsection{Minimax Optimization Problems}
Here, we establish the convergence of TVPNM for MOPs via MVIPs. We first show the relation between $\operatorname{MVIP}(\Upsilon,\Omega)$ in \eqref{MV} and MOPs, that is
\begin{equation}\label{eqMO1}
    \inf_{w\in \mathbb{R}^l} \sup_{y \in \mathbb{R}^m} \mathcal{M}(w,y).
\end{equation}
where $\mathcal{M} : \mathbb{R}^l \times \mathbb{R}^m \to \mathbb{R}$. Recently, the MOP has received growing attention in robust optimization, game theory, and machine learning, particularly in the context of adversarial training and economic management (see \cite{30}). A central issue in MOP is achieving fast and efficient solution methods. This motivates studying MOP~\eqref{eqMO1} beyond the convex-concave regime. We specifically consider the case $l=m$ with
\(
\mathcal{M}(w,y)=\langle \Upsilon(w),w-y\rangle+\Psi(w)-\Psi(y), \quad\forall w,y\in\mathbb{R}^l.
\) The result below demonstrates that $\operatorname{MVIP}(\Upsilon, \Omega)$ in \eqref{MV} is equivalent to MOP in \eqref{eqMO1}.\\

\begin{lemma}\label{lemMOP1}
A point $w^*$ solves $\operatorname{MVIP}(\Upsilon,\Omega)$ in \eqref{MV} iff
\[
\inf_{w \in\mathbb{R}^{l}} \sup_{y\in\mathbb{R}^{l}} \mathcal{M}(w,y)=0,
\]
with $w^*$ attaining the infimum.
\end{lemma}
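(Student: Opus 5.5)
We work directly with the gap function
\[
g(w):=\sup_{y\in\mathbb{R}^l}\mathcal{M}(w,y)=\sup_{y\in\mathbb{R}^l}\bigl\{\langle \Upsilon(w),w-y\rangle+\Psi(w)-\Psi(y)\bigr\}.
\]
The first step is to note that $g(w)\ge 0$ for every $w$ in the domain of $\Psi$, because taking $y=w$ gives $\mathcal{M}(w,w)=0$. For points outside the domain we use the convention $\Psi(w)=+\infty$, so $g(w)=+\infty$. It follows that $\inf_{w}\sup_{y}\mathcal{M}(w,y)\ge 0$ always holds. The lemma therefore reduces to the following claim: $w^*$ solves \eqref{MV} if and only if $g(w^*)=0$.

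For the forward direction, suppose $w^*\in\operatorname{Sol}(\Upsilon,\Omega)$. Then \eqref{MV} states exactly that $\langle\Upsilon(w^*),y-w^*\rangle+\Psi(y)-\Psi(w^*)\ge 0$ for all $y$. Rewriting, this is $\mathcal{M}(w^*,y)\le 0$ for all $y$, so $g(w^*)\le 0$. Combined with $g\ge 0$, we get $g(w^*)=0$. Since no $w$ can make $g(w)$ negative, the infimum equals $0$ and it is attained at $w^*$.

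For the converse, suppose $\inf_w g(w)=0$ and that $w^*$ attains it, so $g(w^*)=0$. Then $\mathcal{M}(w^*,y)\le 0$ for every $y\in\mathbb{R}^l$. Reversing the sign gives exactly \eqref{MV}. We also need $w^*$ to be a legitimate solution point. Properness of $\Psi$ together with $g(w^*)<\infty$ forces $\Psi(w^*)<\infty$. Membership in $\Omega$ then comes from the paper's convention that $\Psi$ encodes the constraint, as in the indicator-function special case; alternatively, since the paper's solution requirement is only through \eqref{MV} over all $y\in\mathbb{R}^l$, we interpret $w^*\in\Omega$ accordingly.

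The only delicate point is the bookkeeping with extended values. We must make sure that $\mathcal{M}(w,w)=0$ is well defined, which requires $\Psi(w)$ to be finite, and that points with $\Psi(w)=+\infty$ cannot achieve a smaller value of the infimum. I will handle this by restricting attention to $\operatorname{dom}\Psi$ and adopting the convention $g\equiv+\infty$ off it. Everything else is a direct rewriting of definitions, so I expect no real obstacle.
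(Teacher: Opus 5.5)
Your proof is correct and follows the paper's argument. The paper also sets $h(w)=\sup_{y}\mathcal{M}(w,y)$, uses $\mathcal{M}(w,w)=0$ to get $h\ge 0$, and then runs the chain $w^*\in\operatorname{Sol}(\Upsilon,\Omega)\Leftrightarrow \mathcal{M}(w^*,y)\le 0\ \forall y \Leftrightarrow h(w^*)=0$. Your extra care with $\operatorname{dom}\Psi$ and your caveat about $w^*\in\Omega$ go beyond the paper, which silently identifies solutions of \eqref{MV} with the inequality over all $y\in\mathbb{R}^l$; reading $\Omega$ as encoded in $\Psi$ is the same tacit assumption the paper's proof relies on.
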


\begin{proof}
Let $h(w) := \sup_{y \in \mathbb{R}^l} \mathcal{M}(w,y)$. Observe that $\mathcal{M}(w,w) = 0$ for all $w\in \mathbb{R}^l$. Then $h(w) \geq 0$ for all $w\in \mathbb{R}^l$. Consequently, one has
\begin{align*}w^* \in  \operatorname{Sol}(\Upsilon, \Omega) &\Leftrightarrow \langle \Upsilon(w^*), y - w^* \rangle + \Psi(y) - \Psi(w^*) \geq 0, \quad \forall y \in \mathbb{R}^l \\
&\Leftrightarrow \langle \Upsilon(w^*), w^* - y \rangle + \Psi(w^*) - \Psi(y) \leq 0, \quad \forall y \in \mathbb{R}^l \\
&\Leftrightarrow h(w^*) = \sup_{y \in \mathbb{R}^l} \mathcal{M}(w^*, y) \leq 0 \\
&\Leftrightarrow h(w^*) = \sup_{y \in \mathbb{R}^l} \mathcal{M}(w^*, y) = 0 \quad [\text{by } h(w^*) \geq 0] \\
&\Leftrightarrow \min_{w\in \mathbb{R}^l} \sup_{y \in \mathbb{R}^l} \mathcal{M}(w, y) = 0,
\end{align*}
as claimed. This proves the result.
\end{proof}

\begin{lemma}\label{MOPlemm}
Suppose that $\Upsilon$ is a $\zeta$-pseudomonotone and continuous function. Then
\[
w^* \in  \operatorname{Sol}(\Upsilon, \Omega) \Leftrightarrow \min_{w\in \mathbb{R}^l} \sup_{y \in \mathbb{R}^l} \mathcal{M}(w,y) = \max_{y \in \mathbb{R}^l} \inf_{w\in \mathbb{R}^l} \mathcal{M}(w,y) = 0.\]
\end{lemma}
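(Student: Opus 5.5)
Let $g(y):=\inf_{w\in\mathbb{R}^l}\mathcal{M}(w,y)$ and $h(w):=\sup_{y\in\mathbb{R}^l}\mathcal{M}(w,y)$, as in the proof of Lemma~\ref{lemMOP1}. Since $\mathcal{M}(w,w)=0$, we get $h\ge 0$ and $g\le 0$ everywhere. The weak duality inequality $\sup_y g(y)\le \inf_w h(w)$ always holds. By Lemma~\ref{lemMOP1}, $w^*\in\operatorname{Sol}(\Upsilon,\Omega)$ iff $\min_w h(w)=0$ with $w^*$ attaining the minimum. So it remains to show that, under pseudomonotonicity, $w^*$ is a solution iff additionally $\max_y g(y)=0$ is attained, and I will show it is attained at $y=w^*$.

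\emph{Forward direction.} Let $w^*\in\operatorname{Sol}(\Upsilon,\Omega)$. Lemma~\ref{lemMOP1} already gives $\min_w h=h(w^*)=0$. For the other side, write
\[
\mathcal{M}(w,w^*)=\langle \Upsilon(w),w-w^*\rangle+\Psi(w)-\Psi(w^*).
\]
Apply pseudomonotonicity (Assumption A$1^*$, implied by the $\zeta$-version A1) with the pair $(w^*,w)$: from $\langle\Upsilon(w^*),w-w^*\rangle+\Psi(w)-\Psi(w^*)\ge0$ we obtain $\langle\Upsilon(w),w-w^*\rangle+\Psi(w)-\Psi(w^*)\ge0$, i.e.\ $\mathcal{M}(w,w^*)\ge0$ for all $w$. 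Taking $w=w^*$ gives equality, so $g(w^*)=0$. Combined with $g\le0$, this yields $\max_y g(y)=g(w^*)=0$, and weak duality is consistent with $\min_w h=0$. Hence the two values coincide and equal $0$.

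\emph{Reverse direction.} If $\min_w\sup_y\mathcal{M}=0$ and $w^*$ attains this minimum, Lemma~\ref{lemMOP1} directly gives $w^*\in\operatorname{Sol}(\Upsilon,\Omega)$. Here I will read the statement, as in Lemma~\ref{lemMOP1}, as saying that $w^*$ is the minimizer. If instead only the maximizing $y^*$ of $g$ is given, I would show that $y^*$ solves the Minty-type inequality $\langle\Upsilon(w),w-y^*\rangle+\Psi(w)-\Psi(y^*)\ge0$ for all $w$. Then I would use continuity of $\Upsilon$ and convexity of $\Psi$ in the standard Minty lemma: take $w=y^*+t(y-y^*)$, use $\Psi(w)\le(1-t)\Psi(y^*)+t\Psi(y)$, divide by $t$, and let $t\downarrow0$. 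This gives $\langle\Upsilon(y^*),y-y^*\rangle+\Psi(y)-\Psi(y^*)\ge0$, so $y^*$ is a solution.

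\emph{Main obstacle.} The main difficulty is the domain mismatch. The assumptions are stated on $\Omega$, while the inf/sup run over $\mathbb{R}^l$. The pseudomonotonicity step therefore needs the relevant points to lie where A$1^*$ applies; I would handle this by taking $\Omega$ to be the effective domain of $\Psi$. The Minty limit argument also needs $\Psi$ finite along the segment, which holds because $\operatorname{dom}\Psi$ is convex and $\Psi(y)=+\infty$ makes the inequality trivial.
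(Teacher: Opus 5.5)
Your proposal is correct and follows essentially the same route as the paper: the paper also uses pseudomonotonicity plus continuity (the Minty equivalence) to characterize $w^*\in\operatorname{Sol}(\Upsilon,\Omega)$ by $\inf_{w}\mathcal{M}(w,w^*)=0$. It then combines this with $\inf_w\mathcal{M}(w,y)\le\mathcal{M}(y,y)=0$ to get $\max_y\inf_w\mathcal{M}=0$, and invokes Lemma~\ref{lemMOP1} for the $\min\sup$ half. The only difference is that you write out both directions of the Stampacchia--Minty equivalence (the pseudomonotonicity step and the segment/convexity limit argument), which the paper asserts in one line.
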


\begin{proof}
From the $\zeta$-pseudomonotonicity and continuity of $\Upsilon$, it implies that $w^* \in \operatorname{Sol}(\Upsilon, \Omega)$ iff it is a solution of \eqref{MV}. Note that $\inf_{w\in \mathbb{R}^l} \mathcal{M}(w,y) \leq \mathcal{M}(y,y) = 0$ for all $y \in \mathbb{R}^l$. So,
\[
\langle \Upsilon(w^*), y - w^* \rangle + \Psi(y) - \Psi(w^*) \geq 0, \quad \forall y \in \mathbb{R}^l.
\]
\begin{equation}\label{eqMOP1}
    \Leftrightarrow \inf_{w\in \mathbb{R}^l} \mathcal{M}(w,w^*) = 0 = \max_{y \in \mathbb{R}^l} \inf_{w\in \mathbb{R}^l} \mathcal{M}(w,y).
\end{equation}
The result now follows from \eqref{eqMOP1} and Lemma~\ref{lemMOP1}, which completes the proof.
\end{proof}
In view of the connection between $\operatorname{MVIP}(\Upsilon, \Omega)$~\eqref{MV} and MOP~\eqref{eqMO1}, From Lemma \eqref{MOPlemm}, we have \(z^{*} := (w^*,y^{*})\in\Omega\times\Omega=\Omega\) as a solution of MOP
\[
\inf_{w \in \Omega_{1}} \ \sup_{y\in \Omega_{2}} \mathcal{M}(w,y).
\]
Moreover, assume the operators:
\[
\nabla_w \mathcal{M} : \mathbb{R}^l \to \mathbb{R}^l
\quad \text{and} \quad
-\nabla_y \mathcal{M} : \mathbb{R}^m \to \mathbb{R}^m.
\]
Then the mapping
\(
\Upsilon := (\nabla_w \mathcal{M}, -\nabla_y \mathcal{M})
\)
is $\Psi$-strongly pseudomonotone with constant $\zeta:=\min\{\zeta_1,\zeta_2\}$, where
$\Psi(w,y)=\Psi_1(w)+\Psi_2(y)$ and $\Psi_1$, $\Psi_2$ are strongly pseudomonotone
with constants $\zeta_1>0$ and $\zeta_2>0$, respectively.
\\[2mm]
For clarity, we adopt the following standing assumptions:
\\[2mm]
\noindent
 \textbf{A4.} The mapping $\mathcal{M}$ satisfies quasi-convexity in its first argument and quasi-concavity in its second argument.
\\[2mm]
\textbf{A5.} The mapping $\nabla \mathcal{M}$ is $L$-Lipschitz continuous, where $L>0$.
\\[2mm]
 \textbf{A6.} The mapping\(\Upsilon= (\nabla_w \mathcal{M},-\nabla_y \mathcal{M}):\mathbb{R}^l\times \mathbb{R}^m\to \mathbb{R}^{l+m}\) is \(\Psi\)-strongly pseudomonotone with constant \(\zeta>0\), where \(\Psi(w,y):=\Psi_1(w)+\Psi_2(y)\).\\[2mm]
The next result follows immediately from Theorem~\eqref{thmFT}.\\
\begin{corollary}
When assumptions $\textbf{A3}$--$\textbf{A6}$ hold, global predefined-time stability with respect to $\mathcal{T}_p$ is guaranteed for the solution $z^{*}=(w^*,y^{*})$ of the general MOP, and the settling time is given by
\(\frac{1}{\Theta_2(p_2-1)} < \int_{t_0}^{+\infty} \Upsilon_2(s)\,ds,\)
and
\(
\mathcal{F}_1\circ \mathcal{F}_2^{-1}
\!\left(\frac{1}{\Theta_2(p_2-1)}\right)
+\frac{1}{\Theta_1(1-p_1)}
<
\int_{t_0}^{+\infty} \Upsilon_1(s)\,ds,\)
where
\(
p_1=\frac{1+\rho_1}{2}\in(0,1),\qquad
p_2=\frac{1+\rho_2}{2}>1
\),
\(
\Theta_1 =2^{p_1}(1- \Xi),\quad
\Theta_2 =2^{p_2}(1- \Xi)^{\rho_2},
\quad
\Xi=\frac{1}{\sqrt{1+2\mu\zeta-\mu^2L^2}},
\)
and
\(
\mathcal{F}_i(t)=\int_{t_0}^t \Upsilon_i(s)\,ds,\quad i=1,2.
\)  
\end{corollary}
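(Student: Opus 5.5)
The plan is to reduce the corollary to Theorem~\ref{thmFT}, applied to the stacked variable $z=(w,y)\in\mathbb{R}^{l+m}$. The operator is $\Upsilon(z)=(\nabla_w\mathcal{M}(w,y),-\nabla_y\mathcal{M}(w,y))$ and the separable function is $\Psi(z)=\Psi_1(w)+\Psi_2(y)$. The TVPNM~\eqref{OD3} is written in the variable $z$, and its proximal operator splits as $\operatorname{prox}_{\mu\Psi}(z)=(\operatorname{prox}_{\mu\Psi_1}(w),\operatorname{prox}_{\mu\Psi_2}(y))$, so the residual $\phi(z)=z-\mathcal{H}(z)$ has exactly the form used in Theorem~\ref{thmFT}. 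The proof then has three steps: verify the hypotheses of that theorem, identify the equilibrium with the minimax solution, and read off the settling-time estimate.

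First, I check the hypotheses of Theorem~\ref{thmFT} for this $\Upsilon$.

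\textbf{A1} is exactly \textbf{A6}: $\Upsilon$ is $\Psi$-strongly pseudomonotone with modulus $\zeta=\min\{\zeta_1,\zeta_2\}$.

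\textbf{A2} follows from \textbf{A5}. For $z=(w,y)$ and $z'=(w',y')$,
\[
\|\Upsilon(z)-\Upsilon(z')\|^2=\|\nabla_w\mathcal{M}(z)-\nabla_w\mathcal{M}(z')\|^2+\|\nabla_y\mathcal{M}(z)-\nabla_y\mathcal{M}(z')\|^2=\|\nabla\mathcal{M}(z)-\nabla\mathcal{M}(z')\|^2\le L^2\|z-z'\|^2,
\]
since the sign flip on the second block does not change its norm.

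\textbf{A3} is assumed directly.

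Under \textbf{A1}--\textbf{A3}, Lemma~\ref{lemE2.5} applies on $\mathbb{R}^{l+m}$ with the same constant $\Xi=1/\sqrt{1+2\mu\zeta-\mu^2L^2}$.

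Second, I identify the unique equilibrium of the TVPNM with the solution $z^*$ of the MOP. By Lemma~\ref{Cor5.1}, equilibria of \eqref{OD3} are exactly the solutions of the MVIP for $(\Upsilon,\Psi)$. It remains to show these are the saddle points of $\mathcal{M}$. The MVIP for $\Upsilon=(\nabla_w\mathcal{M},-\nabla_y\mathcal{M})$ is the first-order optimality system of $\min_w\max_y$. Under the quasi-convex/quasi-concave structure \textbf{A4} together with the pseudomonotonicity \textbf{A6}, this system is equivalent to the saddle-point conditions. Lemma~\ref{MOPlemm} (with Lemma~\ref{lemMOP1}) supplies the equality $\min\sup=\max\inf=0$, which gives existence of the saddle value. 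Uniqueness comes from strong pseudomonotonicity: if $z_1^*,z_2^*$ are two solutions, applying \textbf{A1} to the pair and adding the resulting inequalities forces $\zeta\|z_1^*-z_2^*\|^2\le 0$.

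Third, with these identifications, the Lyapunov computation in the proof of Theorem~\ref{thmFT} with $\mathfrak V(z)=\frac12\|z-z^*\|^2$ carries over verbatim. It yields
\[
\dot{\mathfrak V}\le -\Theta_1 Q_1\mathfrak V^{p_1}-\Theta_2 Q_2\mathfrak V^{p_2},
\]
with the stated $p_i$ and $\Theta_i$. Under the two stated integral conditions, Lemma~\ref{lemFXtime} gives convergence by the bound \eqref{eqtime}, uniformly in $z(0)$, which is the claimed fixed/predefined-time stability.

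The main obstacle I expect is the second step. The link between the MVIP with $\Upsilon=(\nabla_w\mathcal{M},-\nabla_y\mathcal{M})$ and saddle points of a general quasi-convex/quasi-concave $\mathcal{M}$ is not a formal consequence of the earlier lemmas, since those treat the special form $\mathcal{M}(w,y)=\langle\Upsilon(w),w-y\rangle+\Psi(w)-\Psi(y)$. I would first try to show that any saddle point satisfies the first-order MVIP directly. For the converse, I would use \textbf{A4} and \textbf{A6}: a stationary point of a quasi-convex function whose gradient field is pseudomonotone is a minimizer. If that fails in full generality, I would state the corollary for $z^*$ defined as the MVIP solution, in the spirit of the paragraph preceding the corollary, and the rest reduces immediately to Theorem~\ref{thmFT}.
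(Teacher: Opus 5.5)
Your proposal is correct and takes the paper's route: the paper's entire proof is that the corollary follows immediately from Theorem~\ref{thmFT}, and your stacking $z=(w,y)$, with \textbf{A1}--\textbf{A3} checked via \textbf{A6}, \textbf{A5} (the sign flip preserves the norm) and \textbf{A3}, is exactly what that reduction needs. Two small remarks: Lemmas~\ref{lemMOP1} and~\ref{MOPlemm} are equivalences for the special $\mathcal{M}$ and supply no existence (Theorem~\ref{thmFT} simply presupposes a solution), and the identification you flag as the main obstacle closes without \textbf{A4} — apply \textbf{A6} at $z^*$ with test points $(w_t,y^*)$, $w_t=w^*+t(w-w^*)$, use convexity of $\Psi_1$ to get $\frac{d}{dt}\mathcal{M}(w_t,y^*)\ge \Psi_1(w^*)-\Psi_1(w)$, integrate, and argue symmetrically in $y$, which shows $z^*$ is a saddle point of $\mathcal{M}(w,y)+\Psi_1(w)-\Psi_2(y)$.
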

\section{Numerical Example}
In this section, we illustrate the effectiveness of the proposed fixed-time convergent FXT-PDM \eqref{seqODE2} for solving the MVIP \eqref{MV}.

\begin{example} Consider the nonlinear complementarity problem (NCP) with \(\Upsilon(w) := \phi(w) + Bx + p\), where
\(\phi(w) := (\arctan(w_1), \arctan(w_2), \arctan(w_3), \arctan(w_4), \arctan(w_5))\).
The matrix \(B \in \mathbb{R}^{5\times5}\) is given by
\[
B =
\begin{pmatrix}
3 & -1 & 0 & 0 & 0 \\
-1 & 3 & -1 & 0 & 0 \\
0 & -1 & 3 & -1 & 0 \\
0 & 0 & -1 & 3 & -1 \\
0 & 0 & 0 & -1 & 3
\end{pmatrix}.
\]
The equilibrium point is chosen as
\[
w^* = (1.0,\; 2.8,\; 0.5,\; 0,\; 1.5)^\top.
\]
The vector $p$ is defined as
\(
p=-\arctan(w^*)-Bw^*
=(-0.9854,\,-8.1278,\,0.8364,\,2.0000,\,-5.4828)^\top,
\)
which ensures that $w^*$ satisfies the NCP. It can be readily verified that
assumptions A1--A3 are satisfied. Next, the convergence behavior and FXT convergence properties are
investigated. For $n=5$, $\rho_1=0.5$, $\rho_2=1.6$,
$\Upsilon_1(t)=\Upsilon_2(t)=\beta_1=\beta_2=50$,
$\Upsilon_3(t)=\beta_3=20$, $q=0.3$, and $\mu=0.5$ in the FXT-PDM \eqref{seqODE2}, the following five fixed initial values are chosen:
\(
x^{(1)}(0) = (5,\;1,\;2,\;4,\;-2.5)^\top, \
x^{(2)}(0) = (-4,\;2.5,\;2,\;3,\;1)^\top, \
x^{(3)}(0) = (3.5,\;-3,\;4,\;2,\;2.5)^\top, \
x^{(4)}(0) = (2,\;4,\;3,\;-1,\;3.5)^\top, \
x^{(5)}(0) = (1.5,\;5,\;-0.5,\;0.2,\;4)^\top.\)
Figure~\ref{fig:1a} illustrates the state trajectories of FXT-PDM \eqref{seqODE2} for the above initial values, where all state components converge to the equilibrium point \(w^*\).
Figure~\ref{fig:1b} depicts the convergence responses of the errors between \(w^*\) and the solution \(w(t)\) of FXT-PDM \eqref{seqODE2}.
\begin{figure}[H]
\centering
\begin{subfigure}{0.48\textwidth}
    \centering
    \includegraphics[width=\linewidth]{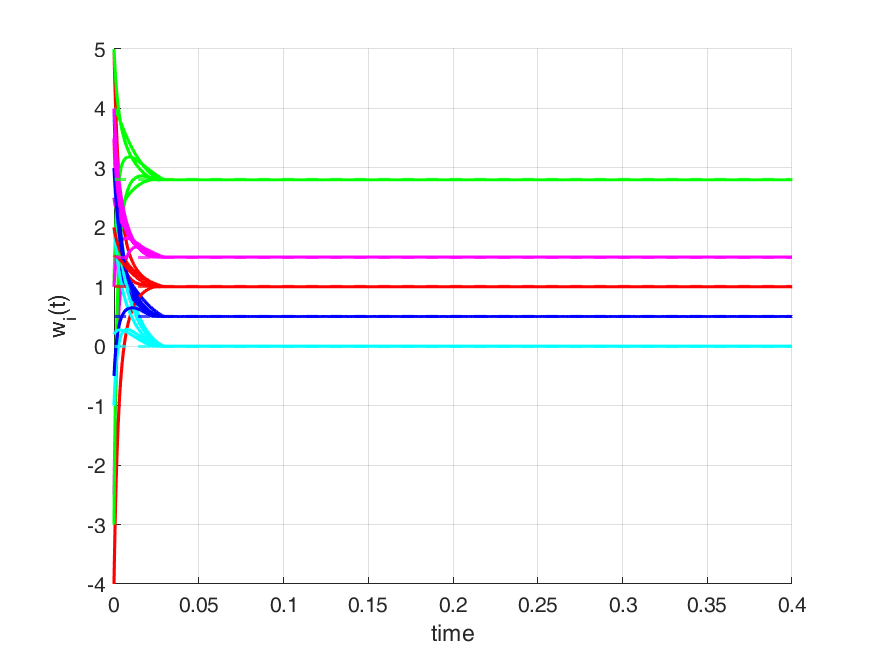}
    \caption{State trajectories}
    \label{fig:1a}
\end{subfigure}
\hfill
\begin{subfigure}{0.48\textwidth}
    \centering
    \includegraphics[width=\linewidth]{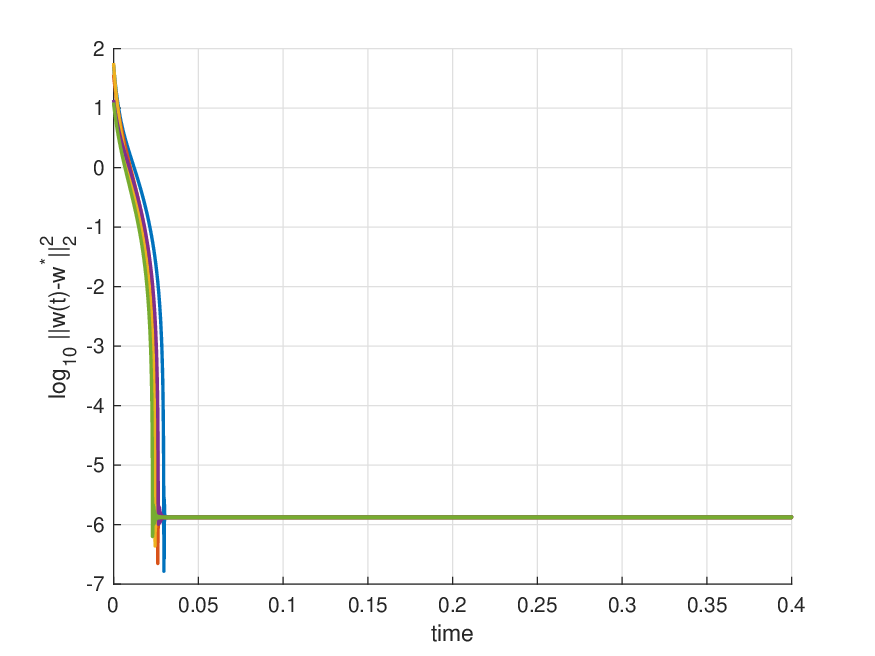}
    \caption{Error convergence}
    \label{fig:1b}
\end{subfigure}
\caption{(a) State trajectories illustrating the convergence behavior of the PNM \eqref{seqODE2} for five distinct initial conditions;
(b) evolution of the error $\log_{10}\|w(t)-w^*\|_2^2$ associated with the PNM \eqref{seqODE2} for the same set of initial conditions.}
\label{fig:example1}
\end{figure}
\begin{figure}[H]
\centering
\begin{subfigure}{0.48\textwidth}
    \centering
    \includegraphics[width=\linewidth]{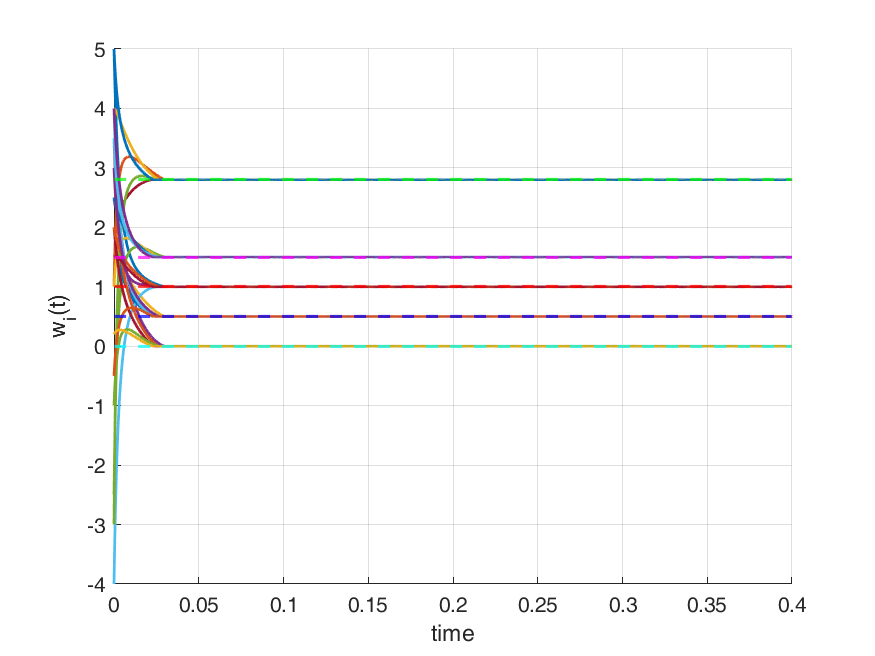}
    \caption{State trajectories}
    \label{fig:2a}
\end{subfigure}
\hfill
\begin{subfigure}{0.48\textwidth}
    \centering
    \includegraphics[width=\linewidth]{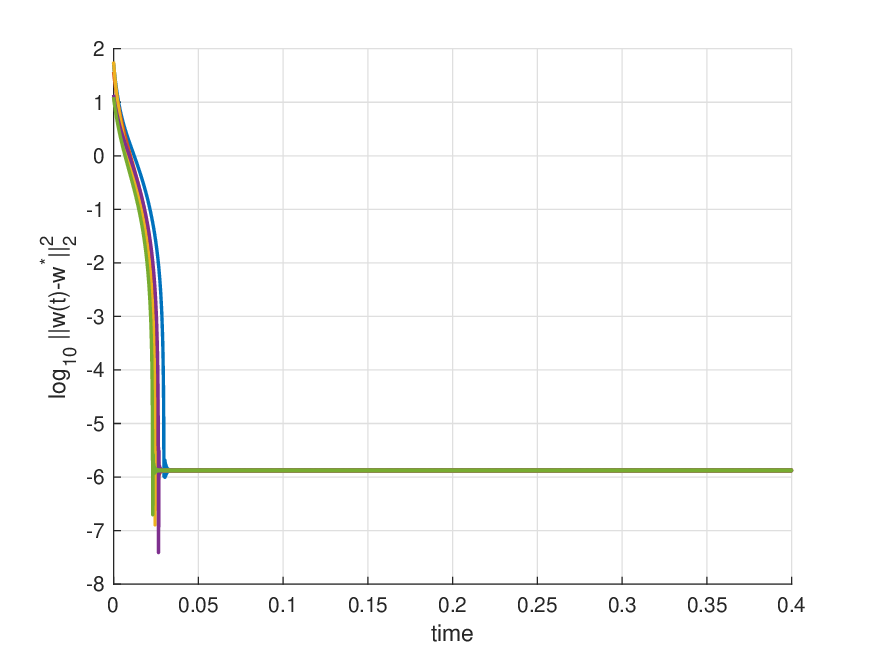}
    \caption{Error convergence}
    \label{fig:2b}
\end{subfigure}
\caption{(a) Illustrates the state trajectories of the FXPDS under a state-dependent disturbance. The disturbance affects the convergence behavior, leading to bounded trajectories that approach a neighborhood of the equilibrium point.
(b) illustrates the evolution of the error 
$\log_{10}\|w(t)-w^*\|_2^2$, which demonstrates the practical stability of the proposed system under disturbance.}
\label{fig:exampl2}
\end{figure}
\end{example}

\section{Conclusion}\label{p3sec5}

This paper introduces a class of original fixed-time convergent proximal neurodynamic models strategies for solving \(\operatorname{MVIP}(\Upsilon, \Omega )\) \eqref{MV}. A Lyapunov-based approach is employed to rigorously verify the fixed-time convergence of the proposed neurodynamic models toward the solution of MVI. Under the assumptions of strong pseudo-monotonicity and Lipschitz-type continuity. The model's robustness to bounded disturbances was also analyzed, ensuring reliable performance in practical settings. We also apply the proposed method to COPs and MOPs. Numerical experiments confirmed the fast convergence and effectiveness of the proposed approach, demonstrating its applicability to MVIP. 

Future research may focus on extending the proposed framework to stochastic variational inequalities and different types of neurodynamic systems, distributed and networked systems, and applications in signal processing, game theory, and machine learning.

\section*{CRediT authorship contribution statement}
V. K. Khan: Writing-original draft, writing-review and editing, methodology, software, conceptualization; V. K. Varshney: visualizations, investigation, formal analysis; M. K. Ahmad: supervision, investigation, formal analysis, writing review and editing. All authors have read and approved the final version of the manuscript for publication.
\section*{Use of AI tools declaration}
The authors declare that they have not used AI tools in the creation of this article.
\section*{Declaration of interests} 
The authors declare that they have no known competing financial interests or personal relationships 
that could have appeared to influence the work reported in this paper.
\section*{Conflict of interest}
The authors declare no conflicts of interest in this paper.
\section*{Funding}
This research received no external funding.

\end{document}